\newtheorem{theorem}{Theorem}[section]
\newtheorem{lemma}[theorem]{Lemma}
\newtheorem{prop}[theorem]{Proposition}
\theoremstyle{remark}
\newtheorem{remark}[theorem]{Remark}
\def\N{{\mathbb N}}
\def\O{{\mathcal{O}}}
\def\TT{{\mathbb T}}
\def\Z{{\mathbb Z}}
\def\A{{\mathcal{A}}}
\def\B{{\mathcal{B}}}
\def\K{{\mathcal{K}}}
\def\I{{\mathcal{I}}}
\def\J{{\mathcal{J}}}
\newcommand{\clsp}{\overline{\operatorname{span}}}
\newcommand{\lt}{\operatorname{lt}}
\newcommand{\id}{\operatorname{id}}
\newcommand{\piso}{\operatorname{piso}}
\newcommand{\Ind}{\operatorname{Ind}}
\newcommand{\ind}{\operatorname{\textsf{ind}}}
\newcommand{\Prim}{\operatorname{Prim}}
\newcommand{\whitesquare}{\hfill $\whitesquare$\newline\vspace{0.4cm}}
\numberwithin{equation}{section}
\begin{document}

\title[Primitive ideal space of the semigroup crossed product]
{The primitive ideal space of the partial-isometric crossed product by automorphic actions of the semigroup $\N^{2}$}

\author[Saeid Zahmatkesh]{Saeid Zahmatkesh}
\address{Mathematics and Statistics with Applications (MaSA), Department of Mathematics, Faculty of Science, King Mongkut's University
of Technology Thonburi, Bangkok 10140, THAILAND}
\email{saeid.zk09@gmail.com, saeid.kom@kmutt.ac.th}

%\email[Corresponding author]

%\date{\today}

%\thanks{}

\subjclass[2010]{Primary 46L55}
\keywords{$C^*$-algebra, automorphism, partial isometry, crossed product, primitive ideal}

\begin{abstract}
Let $(A,\mathbb{N}^{2},\alpha)$ be a dynamical system consisting of a $C^*$-algebra $A$ and an action $\alpha$ of $\mathbb{N}^{2}$ on $A$ by
automorphisms. Let $A\times_{\alpha}^{\piso}\mathbb{N}^{2}$ be the partial-isometric crossed product of the system. We apply the fact that
it is a full corner of a crossed product by the group $\Z^{2}$ in order to give a complete description of its primitive ideal space.
\end{abstract}
\maketitle

\section{Introduction}
\label{intro}
It is known that if a $C^*$-algebra $\B$ is a full corner in a $C^*$-algebra $\A$, then they are Morita equivalent, and hence, their
primitive ideal spaces are homeomorphic (see \cite{RW}). Recall that the primitive ideal space of a $C^*$-algebra $\A$ is denoted by
$\Prim \A$. Now, in the present work, we consider the dynamical system $(A,\N^{2},\alpha)$
in which $\N^{2}$ denotes the positive cone of the (abelian lattice-ordered) group $\Z^{2}$, $A$ is a $C^*$-algebra, which is not
necessarily unital, and $\alpha$ is an action of $\mathbb{N}^{2}$ on $A$ by automorphisms such that $\alpha_{0}=\id$. Our goal is
to describe the primitive ideal space of the partial-isometric crossed product $A\times_{\alpha}^{\piso}\N^{2}$ of the system and its
hull-kernel (Jacobson) topology completely. To do so, since $A\times_{\alpha}^{\piso}\N^{2}$ is a full corner in a group crossed product $(B_{\Z^{2}}\otimes A)\rtimes\Z^{2}$ (see \cite[\S 5]{SZ2}), it suffices to describe $\Prim((B_{\Z^{2}}\otimes A)\rtimes\Z^{2})$, for
which, we then apply the works on the ideal structure of crossed products by groups available in \cite{ECH,W}. So, we need to consider
the following two conditions:
\begin{itemize}
\item[(i)] when $A$ is separable and abelian;
\item[(ii)] when $A$ is separable and $\Z^{2}$ acts on $\Prim A$ freely. %(see \S\ref{sec:pre})
\end{itemize}
Under the first condition, we apply \cite[Theorem 8.39]{W} to see that $\Prim((B_{\Z^{2}}\otimes A)\rtimes\Z^{2})$ is
homeomorphic to a quotient of the product space
\begin{align}
\label{PS}
\Delta(B_{\Z^{2}})\times \Delta(A)\times\widehat{\Z^{2}}=\Delta(B_{\Z})\times\Delta(B_{\Z})\times \Delta(A)\times \TT^{2},
\end{align}
where $\Delta(B_{\Z})$ and $\Delta(A)$ are the spectrums of the (abelian) $C^*$-algebras $B_{\Z}$ and $A$, respectively.
Then, the quotient space is identified by the disjoint union
\begin{align}
\label{DISJ-U}
\Delta(A)\sqcup \Prim(A\rtimes_{\dot{\alpha}}\Z)\sqcup \Prim(A\rtimes_{\ddot{\alpha}}\Z)\sqcup \Prim (A\rtimes_{\alpha}\Z^{2})
\end{align}
through parameterizing the equivalent classes, where $\dot{\alpha}$ and $\ddot{\alpha}$ are two automorphisms corresponding to two
generators of the group $\Z^{2}$. Finally, the open sets in (\ref{DISJ-U}) are precisely identified by using the fact the quotient map
of (\ref{PS}) onto (\ref{DISJ-U}) is open (see \cite[Remark 8.40]{W}).
Under the second condition, we apply \cite[Corollary 7.35]{ECH} to see that $\Prim((B_{\Z^{2}}\otimes A)\rtimes\Z^{2})$ is homeomorphic
to a quotient of the product space
$$\Prim (B_{\Z^{2}}\otimes A)=\Prim B_{\Z^{2}} \times \Prim A=\Prim B_{\Z} \times \Prim B_{\Z} \times \Prim A.$$
This quotient space is called the \emph{quasi-orbit space}, which, by a similar discussion to the first condition, will be
described along with its quotient topology precisely. Note that \cite[Corollary 3.13]{SZ4} already indicates that the primitive ideals
of $A\times_{\alpha}^{\piso} \N^{2}$ are coming from the four sets
$$\Prim A, \Prim(A\rtimes_{\dot{\alpha}}\Z), \Prim(A\rtimes_{\ddot{\alpha}}\Z),\ \textrm{and}\ \Prim (A\rtimes_{\alpha}\Z^{2}).$$
So, all these ideals will also be identified in the present work under the conditions (i) and (ii) mentioned earlier.
We would like to mention that the present work is therefore a generalization of the effort in \cite{LZ} based on the results of
\cite{AZ}. To study the theory of the partial-isometric crossed products, readers may refer to \cite{Fowler,LR,SZ3} as a preliminary background.
Further studies in this regard are available in \cite{Adji-Abbas,AZ,AZ2,LZ,SZ,SZ2,SZ4}.

We begin with a preliminary section containing a quick recall on some results from \cite{SZ3,SZ4,SZ2}, and a brief discussion
on the primitive ideal space of crossed products by groups from \cite{W,ECH}. In section \ref{prim A}, we identify the primitive ideals
of the algebra $A\times_{\alpha}^{\piso}\N^{2}$ derived from $\Prim A$.
In sections \ref{case 1} and \ref{case 2}, by applying the realization of $A\times_{\alpha}^{\piso}\N^{2}$
as a full corner of a crossed product by the group $\Z^{2}$, $\Prim(A\times_{\alpha}^{\piso}\N^{2})$ is completely described under
some certain conditions. Moreover, we identify all the primitive ideals of $A\times_{\alpha}^{\piso}\N^{2}$, and provide necessary
and sufficient conditions under which $A\times_{\alpha}^{\piso}\N^{2}$ is GCR (type I or postliminal). In section \ref{sec:last}, the
final section, we see that $A\times_{\alpha}^{\piso}\N^{2}$ is primitive precisely when $A$ is primitive.
\section{Preliminaries}
\label{sec:pre}

\subsection{The algebra $A\times_{\alpha}^{\piso}\N^{2}$ as a full corner}
\label{piso fc}
First of all, since $\Z^{2}$ is an additive group, the notation ``$+$" is used for its action in the present work.

Suppose that $(A,\mathbb{N}^{2},\alpha)$ is a dynamical system consisting of a $C^*$-algebra $A$ and an action $\alpha$ of
$\mathbb{N}^{2}$ on $A$ by automorphisms. Let $\pi$ be a nondegenerate representation of $A$ on a Hilbert space $H$. If the maps
$$\tilde{\pi}:A\rightarrow B(\ell^2(\N^{2})\otimes H)\ \ \textrm{and}\ \ V:\N^{2}\rightarrow B(\ell^2(\N^{2})\otimes H)$$
are defined by
$$(\tilde{\pi}(a)f)(s)=\pi(\alpha_{s}(a))f(s)\ \ \textrm{and}\ \ (V_{t}f)(s)=f(s+t)$$
for all $f\in \ell^2(\N^{2})\otimes H\simeq \ell^2(\N^{2},H)$ and $s,t\in \N^{2}$, then the pair $(\tilde{\pi},V)$ is a covariant
partial-isometric representation of $(A,\N^{2},\alpha)$ on $\ell^2(\N^{2},H)$.
Moreover, if $\pi$ is faithful, so is $\tilde{\pi}$ (see \cite[Example 4.3]{SZ3}).

Let $A\times_{\alpha}^{\piso}\N^{2}$ be the partial-isometric crossed product of the system $(A,\mathbb{N}^{2},\alpha)$. Recall from
\cite[\S 2]{SZ2} (see also \cite[Remark 3.11]{SZ4}) that we have the short exact sequence
\begin{align}
\label{ext.seq.1}
0 \longrightarrow \ker q \stackrel{}{\longrightarrow} A\times_{\alpha}^{\piso}\N^{2}
\stackrel{q}{\longrightarrow} A\rtimes_{\alpha}\Z^{2} \longrightarrow 0
\end{align}
of $C^*$-algebras, and by \cite[Corollary 3.13]{SZ4}, the algebra $\K(\ell^{2}(\N^{2}))\otimes A$ of compact operators is
contained in $\ker q$ as an (essential) ideal such that
$$\ker q/[\K(\ell^{2}(\N^{2}))\otimes A]\simeq
\big[\K(\ell^{2}(\N))\otimes (A\rtimes_{\dot{\alpha}}\Z)\big] \oplus \big[\K(\ell^{2}(\N))\otimes (A\rtimes_{\ddot{\alpha}}\Z)\big],$$
where $\dot{\alpha}$ and $\ddot{\alpha}$ are two automorphic actions corresponding to two generators of the group $\Z^{2}$.
These results in \cite{SZ4} are obtained by applying the fact that the algebra $A\times_{\alpha}^{\piso}\N^{2}$ is a full corner
in a crossed product by the group $\Z^{2}$, which is provided in \cite{SZ2}. More precisely, let $B_{\Z^{2}}$ be the $C^{*}$-subalgebra of $\ell^{\infty}(\Z^{2})$ generated by the characteristic functions $\{1_{x}\in\ell^{\infty}(\Z^{2}):x\in \Z^{2}\}$, such that
\[
1_{x}(y)=
   \begin{cases}
      1 &\textrm{if}\empty\ \text{$x\leq y$,}\\
      0 &\textrm{otherwise.}
   \end{cases}
\]
Then, there is an action $\tau$ of $\Z^{2}$ on $B_{\Z^{2}}$ given by translation. So, the system $(A,\mathbb{N}^{2},\alpha)$ gives rise
to the group dynamical system $(B_{\Z^{2}}\otimes A, \Z^{2}, \tau\otimes \alpha^{-1})$. Also, if $B_{\Z^{2},\infty}$ is the $C^{*}$-subalgebra
of $B_{\Z^{2}}$ generated by the elements $\{1_{x}-1_{y}: x\leq y\in \Z^{2}\}$, then it is a $\tau$-invarian (essential) ideal of $B_{\Z^{2}}$.
Now, by \cite[Corollary 5.3]{SZ2}, $A\times_{\alpha}^{\piso} \N^{2}$ and the ideal $\ker q$ sit in the group crossed products
$(B_{\Z^{2}}\otimes A)\times_{\tau\otimes\alpha^{-1}} \Z^{2}$ and $(B_{\Z^{2},\infty}\otimes A)\times_{\tau\otimes\alpha^{-1}} \Z^{2}$,
respectively, as full corners. Thus, the information on $A\times_{\alpha}^{\piso} \N^{2}$ in \cite{SZ4} are indeed imported from the
group crossed product $(B_{\Z^{2}}\otimes A)\times_{\tau\otimes\alpha^{-1}} \Z^{2}$.

\subsection{The primitive ideal space of crossed products by groups}
\label{prim ideal LCTG}
Suppose that $G$ is an abelian countable discrete group which acts on a second countable locally compact Hausdorff space
$X$. So, the pair $(G,X)$ is a second countable locally compact transformation group, which gives rise to the separable
group dynamical system $(C_{0}(X),G,\lt)$ with $G$ abelian. If $C_{0}(X)\rtimes_{\lt} G$ is the group crossed product of
the system, then its primitive ideals are known by \cite[Theorem 8.21]{W}, and a complete description of the topology of
$\Prim(C_{0}(X)\rtimes_{\lt} G)$ is available in \cite[Theorem 8.39]{W}. In brief, for every $x\in X$, let
$$\varepsilon_{x}:C_{0}(X)\rightarrow \mathbb{C}$$
be the evaluation map at $x$, and the sets
$$G\cdot x:=\{t\cdot x: t\in G\}\ \ \textrm{and}\ \ G_{x}:=\{t\in G: t\cdot x=x\}$$
the \emph{$G$-orbit} and the \emph{stability group} of $x$, respectively. Now, there is an equivalence relation on the product space
$X\times \widehat{G}$ such that $(x,\gamma)\sim (y,\mu)$ if
\begin{align}
\label{f2}
\overline{G\cdot x}=\overline{G\cdot y}\ (\textrm{which implies that}\ G_{x}=G_{y})\ \ \textrm{and}\ \ \gamma|_{G_{x}}=\mu|_{G_{x}}.
\end{align}
Let $X\times \widehat{G}/\sim$ be the quotient space equipped with the quotient topology. We have:
\begin{theorem}\cite[Theorem 8.39]{W}
\label{Will th}
Let $(G,X)$ be a second countable locally compact transformation group with $G$ abelian. Then, the map
$$\Phi:X\times \widehat{G} \rightarrow \Prim(C_{0}(X)\rtimes_{\lt} G)$$
defined by
$$\Phi(x,\gamma):=\ker\big(\Ind_{G_{x}}^{G}(\varepsilon_{x}\rtimes \gamma|_{G_{x}})\big)$$
is a continuous and open surjection which factors through a homeomorphism of $X\times \widehat{G}/\sim$ onto $\Prim(C_{0}(X)\rtimes_{\lt} G)$.
\end{theorem}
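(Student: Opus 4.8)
The plan is to recognize $\Phi$ as the output of the Mackey machine for the transformation-group algebra $C_0(X)\rtimes_{\lt}G$ and then to verify surjectivity, the description of its fibers, continuity, and openness in turn, concluding with the elementary observation that a continuous open bijection out of the quotient is a homeomorphism.

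First I would establish surjectivity. The parametrization of the primitive ideals as kernels of representations induced from stability groups is already available from \cite[Theorem 8.21]{W}; the input there is that $G$, being abelian, is amenable, so the generalized Effros--Hahn conjecture guarantees that every primitive ideal of $C_0(X)\rtimes_{\lt}G$ has the form $\ker\big(\Ind_{G_x}^{G}(\varepsilon_x\rtimes\chi)\big)$ for some $x\in X$ and some $\chi\in\widehat{G_x}$. Since $G_x$ is a subgroup of the abelian group $G$, the restriction map $\widehat{G}\to\widehat{G_x}$ is surjective, so each such $\chi$ equals $\gamma|_{G_x}$ for a suitable $\gamma\in\widehat{G}$. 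Hence every primitive ideal lies in the range of $\Phi$.

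Next I would determine when $\Phi(x,\gamma)=\Phi(y,\mu)$. Inducing from a point depends only on its orbit up to unitary equivalence, and on passing to kernels the induced ideal depends only on the orbit closure $\overline{G\cdot x}$; this is exactly the quasi-orbit phenomenon and is what forces $G_x=G_y$ once $\overline{G\cdot x}=\overline{G\cdot y}$. Given equal orbit closures, the Mackey analysis of the little group $G_x$ shows that the only remaining invariant of the induced ideal is the restricted character, so the two kernels coincide precisely when $\gamma|_{G_x}=\mu|_{G_x}$. This is the relation (\ref{f2}), and it shows that $\Phi$ factors through a bijection of $X\times\widehat{G}/\!\sim$ onto $\Prim(C_0(X)\rtimes_{\lt}G)$.

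Finally I would treat the topology. Continuity of $\Phi$ follows from the continuity of Rieffel induction with respect to the Fell topology together with the joint continuity of $(x,\gamma)\mapsto\varepsilon_x\rtimes\gamma|_{G_x}$, an argument made routine by the second-countability hypotheses, which permit working with sequences. The genuine obstacle is openness: one must show that $\Phi$ sends open subsets of $X\times\widehat{G}$ to open subsets of $\Prim$, equivalently that hull--kernel convergence of a net of induced ideals can be lifted to convergence of the corresponding pairs modulo $\sim$. I expect to prove this by testing against a countable basis and invoking the criterion for a map into $\Prim$ to be open in terms of the closure operation, thereby reducing the claim to the statement that whenever $\Phi(x,\gamma)$ lies in the closure of a family of induced ideals, the pair $(x,\gamma)$ lies in the closure of the associated family of pairs. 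Granting this, $\Phi$ descends to a continuous open bijection on $X\times\widehat{G}/\!\sim$, and a continuous open bijection is a homeomorphism, which completes the proof.
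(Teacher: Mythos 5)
Note first that the paper does not prove this statement at all: it is imported verbatim as background, with the citation \cite[Theorem 8.39]{W}, so there is no internal proof to compare against. Your proposal therefore has to be measured against Williams' actual argument, and measured that way it is a correct reconstruction of the \emph{architecture} but not a proof. The skeleton you give is the right one: surjectivity via \cite[Theorem 8.21]{W} (Gootman--Rosenberg/Effros--Hahn, applicable because abelian groups are amenable) together with surjectivity of the restriction map $\widehat{G}\to\widehat{G_x}$; then identification of the fibers of $\Phi$ with the relation $\sim$; then continuity and openness, and finally the elementary fact that a continuous open bijection is a homeomorphism. The easy auxiliary claim you use implicitly --- that $\overline{G\cdot x}=\overline{G\cdot y}$ forces $G_x=G_y$ --- is indeed routine for abelian $G$, since $y\in\overline{G\cdot x}$ gives $G_x\subseteq G_y$ by continuity of the action.

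The genuine gaps are exactly where the substance of the theorem lies. First, the fiber computation: you assert, by appeal to ``the Mackey analysis of the little group,'' that $\ker\big(\Ind_{G_x}^{G}(\varepsilon_x\rtimes\gamma|_{G_x})\big)=\ker\big(\Ind_{G_y}^{G}(\varepsilon_y\rtimes\mu|_{G_y})\big)$ precisely when the orbit closures agree and $\gamma|_{G_x}=\mu|_{G_x}$. The forward implication is a real theorem: one must recover $\overline{G\cdot x}$ from the restriction of the induced ideal to $C_0(X)$, and then recover the restricted character from the ideal as well; nothing in your text carries this out. Second, and more seriously, openness: you correctly flag it as ``the genuine obstacle,'' but what you offer is a statement of intent (``I expect to prove this by testing against a countable basis\dots'') rather than an argument. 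Reducing openness to the statement that hull--kernel convergence of induced ideals lifts to convergence of pairs modulo $\sim$ is merely a restatement of what must be shown; verifying that lifting is the technical heart of Williams' Theorem 8.39, resting on Green's imprimitivity theorem and a Fell-type convergence criterion for maps into $\Prim$, and it occupies the bulk of his Chapter 8. As it stands, your proposal is a faithful map of the proof's structure with its two load-bearing steps left as placeholders, so it does not constitute a proof.
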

To see more details, interested readers are referred to \cite{W}.

Next, recall that if $G$ is a (discrete) group with the unit element $e$ which acts on a topological space $Z$, then the action
of $G$ on $Z$ is called \emph{free} (or we say $G$ acts on $Z$ \emph{freely}) if all stability groups are just the trivial subgroup $\{e\}$.
Also, there is an equivalence relation $\sim$ on $Z$ such that
$$z_{1}\sim z_{2}\Longleftrightarrow\overline{G\cdot z_{1}}=\overline{G\cdot z_{2}}.$$
for all $z_{1},z_{2}\in Z$. If $\O(Z)$ denotes the set of all equivalence classes, then it is called the \emph{quasi-orbit space} when
equipped with the quotient topology, which is always a $T_{0}$-topological space. The equivalence class of each $z\in Z$ is called the \emph{quasi-orbit} of $z$ and denoted by $\O(z)$. As an example, if $(A,G,\alpha)$ is a group dynamical system, then we can talk about the
quasi-orbit space $\O(\Prim A)$. This is due to the fact that the system defines an action of $G$ on $\Prim A$ by
$$t\cdot P:=\alpha_{t}(P)=\{\alpha_{t}(a): a\in P \}$$
for all $t\in G$ and $P\in \Prim A$.

Let $(A,G,\alpha)$ be a group dynamical system, $\pi$ a nondegenerate representation of $A$ on a Hilbert space $H$ such that $\ker\pi=J$.
Recall that there is a covariant representation $(\tilde{\pi},U)$ of $(A,G,\alpha)$ on the Hilbert space
$\ell^{2}(G,H)\simeq \ell^{2}(G)\otimes H$ defined by
$$(\tilde{\pi}(a)f)(s)=\pi(\alpha_{s^{-1}}(a))f(s)\ \ \textrm{and}\ \ (U_{t}f)(s)=f(t^{-1}s)$$ for every $a\in A$, $f\in \ell^{2}(G,H)$,
and $s,t\in G$. The corresponding (nondegenerate) representation $\tilde{\pi}\rtimes U$ of the crossed product $A\rtimes_{\alpha} G$ of
the system is denoted by $\Ind\pi$, and $\ker(\Ind \pi)$ by $\Ind J=\Ind (\ker \pi)$. Now, if in the system $(A,G,\alpha)$, $A$ is separable
and $G$ is an abelian discrete countable group, which acts on $\Prim A$ freely, then each primitive idea of $A\rtimes_{\alpha} G$ is of the
form $\Ind P$ induced by a primitive ideal $P$ of $A$. More precisely:
\begin{theorem}\cite[Corollary 7.35]{ECH}
\label{prim free action}
Let $(A,G,\alpha)$ be a dynamical system in which $A$ is separable and $G$ is an amenable discrete countable group. If $G$ acts on
$\Prim A$ freely, then the map
$$\O(\Prim A)\rightarrow \Prim(A\rtimes_{\alpha} G)$$
$$\O(P)\mapsto \Ind P=\ker(\Ind \pi)$$
is a homeomorphism, where $\pi$ is an irreducible representation of $A$ with $\ker\pi=P$. In particular, $A\rtimes_{\alpha} G$ is simple
if and only if every $G$-orbit is dense in $\Prim A$, and $A\rtimes_{\alpha} G$ is primitive if and only if there exists a dense
$G$-orbit in $\Prim A$.
\end{theorem}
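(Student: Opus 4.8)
The plan is to derive the statement from the general theory of Effros--Hahn induction together with the Gootman--Rosenberg solution of the generalized Effros--Hahn conjecture. I first record the induction map precisely: for $P=\ker\pi\in\Prim A$ with $\pi$ irreducible, the representation $\Ind\pi=\tilde{\pi}\rtimes U$ of $A\rtimes_{\alpha}G$ built in the preliminaries has kernel $\Ind P$, and a direct computation shows that its restriction $\tilde{\pi}|_{A}$ has kernel $\bigcap_{t\in G}\alpha_{t}(P)$, the largest $G$-invariant ideal of $A$ contained in $P$. This invariant ideal corresponds exactly to the closed invariant subset $\overline{G\cdot P}$ of $\Prim A$, so $\Ind P$ is unchanged when $P$ is replaced by any $P'$ with $\overline{G\cdot P'}=\overline{G\cdot P}$, and conversely $\Ind P_{1}=\Ind P_{2}$ forces $\bigcap_{t}\alpha_{t}(P_{1})=\bigcap_{t}\alpha_{t}(P_{2})$ and hence $\overline{G\cdot P_{1}}=\overline{G\cdot P_{2}}$. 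This gives a well-defined \emph{injective} map $\O(\Prim A)\to\Prim(A\rtimes_{\alpha}G)$ sending $\O(P)$ to $\Ind P$.

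The crucial step is surjectivity, and this is where the freeness hypothesis is used. Because $A$ is separable, the Gootman--Rosenberg theorem applies and tells us that every primitive ideal of the reduced crossed product $A\rtimes_{\alpha,r}G$ is of the form $\Ind_{G_{P}}^{G}Q$ for some $P\in\Prim A$ and some primitive ideal $Q$ of the stabilizer crossed product $A\rtimes_{\alpha}G_{P}$ lying over $P$. Since $G$ is amenable, the reduced and full crossed products coincide, so this in fact describes all of $\Prim(A\rtimes_{\alpha}G)$. Freeness of the action on $\Prim A$ means every stability group $G_{P}$ is trivial, so $A\rtimes_{\alpha}G_{P}=A$ and the Effros--Hahn induction reduces to $\Ind P$ with $P\in\Prim A$. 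Hence every primitive ideal of $A\rtimes_{\alpha}G$ equals $\Ind P$ for some $P$, and the map above is onto. I expect this appeal to Gootman--Rosenberg to be the main obstacle: it is a deep theorem, and one must check that the separability and amenability hypotheses are precisely what allow the passage from the reduced picture, where Effros--Hahn induction lives, to the full crossed product of the statement.

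It remains to promote the bijection to a homeomorphism and to read off the final assertions. The map $P\mapsto\Ind P$ from $\Prim A$ to $\Prim(A\rtimes_{\alpha}G)$ is continuous for the hull-kernel topologies by the standard continuity of induction, and since $\O(\Prim A)$ carries the quotient topology, the factored bijection is continuous; its inverse is the map sending a primitive ideal $I$ to the quasi-orbit determined by the $G$-invariant ideal $I\cap A$ of $A$, which is again continuous, so the bijection is a homeomorphism. Finally, the two special cases are immediate. The algebra $A\rtimes_{\alpha}G$ is simple exactly when $\Prim(A\rtimes_{\alpha}G)$ is a single point, i.e. when $\O(\Prim A)$ is a singleton; by the definition of the quasi-orbit relation this happens precisely when every $G$-orbit is dense in $\Prim A$. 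Likewise $A\rtimes_{\alpha}G$ is primitive exactly when $\{0\}$ is a primitive ideal, i.e. when $\Prim(A\rtimes_{\alpha}G)$ possesses a dense point; under the homeomorphism this corresponds to a dense quasi-orbit, equivalently a dense $G$-orbit in $\Prim A$.
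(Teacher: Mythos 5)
This statement is imported by the paper verbatim from \cite[Corollary 7.35]{ECH} and is given no proof there, so there is no internal argument to compare against; your sketch reconstructs the standard proof from the cited source itself. Your outline is correct and matches that route: well-definedness and injectivity via $\Ind P\cap A=\bigcap_{t\in G}\alpha_{t}(P)$ together with the correspondence between $G$-invariant ideals and closed invariant subsets of $\Prim A$ (so that $\Ind P$ depends only on $\overline{G\cdot P}$), surjectivity via the Gootman--Rosenberg solution of the generalized Effros--Hahn conjecture with freeness collapsing all stabilizers and amenability identifying the full and reduced crossed products, and the upgrade to a homeomorphism via continuity of induction and of restriction, from which the simplicity and primitivity statements follow by reading off when the quasi-orbit space is a point, respectively has a dense point.
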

Recall that we say a $C^{*}$-algebra is \emph{simple} if it does not have any nontrivial ideal. It is called \emph{primitive} if it
has a faithful nonzero irreducible representation, in other words, the zero ideal is a primitive ideal of it.

\section{Primitive Ideals of $A\times_{\alpha}^{\piso}\N^{2}$ derived from $\Prim A$}
\label{prim A}
%\label{sec:prim auto piso}
Let $(A,\mathbb{N}^{2},\alpha)$ be a dynamical system consisting of a $C^*$-algebra $A$ and an action $\alpha$ of $\mathbb{N}^{2}$ on $A$
by automorphisms. Suppose that $A\times_{\alpha}^{\piso}\N^{2}$ is the partial-isometric crossed product of the system. Since it is a
full corner in the group crossed product $(B_{\Z^{2}}\otimes A)\rtimes_{\tau\otimes\alpha^{-1}}\Z^{2}$ by \cite[Corollary 5.3]{SZ2}, in order to
describe $\Prim (A\times_{\alpha}^{\piso}\N^{2})$, it is enough to describe $\Prim ((B_{\Z^{2}}\otimes A)\rtimes_{\tau\otimes\alpha^{-1}}\Z^{2})$
and its topology. First of all, since the algebra $\K(\ell^{2}(\N^{2}))\otimes A$ of compact operators sits in
$A\times_{\alpha}^{\piso}\N^{2}$ as an essential ideal (see \cite[Corollary 3.13]{SZ4}),
$$\Prim (\K(\ell^{2}(\N^{2}))\otimes A)\simeq \Prim A$$
sits in $\Prim (A\times_{\alpha}^{\piso}\N^{2})$ as an open dense subset. More precisely, there is a
homeomorphism of $\Prim A$ onto the open dense subset
$$U:=\big\{\I\in\Prim (A\times_{\alpha}^{\piso}\N^{2}):\K(\ell^{2}(\N^{2}))\otimes A\not\subset \I\big\}$$
of $\Prim (A\times_{\alpha}^{\piso}\N^{2})$. So, we first identify the elements of $U$, namely, the primitive ideals of
$A\times_{\alpha}^{\piso}\N^{2}$ derived from $\Prim A$. Then, in order to identify other primitive ideals of $A\times_{\alpha}^{\piso}\N^{2}$
derived from the other three sets
$$\Prim(A\rtimes_{\dot{\alpha}}\Z), \Prim(A\rtimes_{\ddot{\alpha}}\Z),\ \textrm{and}\ \Prim (A\rtimes_{\alpha}\Z^{2})
\ (\textrm{see}\ \S\ref{sec:pre}\ \textrm{or}\ \S\ref{intro}),$$
and describe the topology of
$$\Prim ((B_{\Z^{2}}\otimes A)\rtimes_{\tau\otimes\alpha^{-1}}\Z^{2})\simeq \Prim (A\times_{\alpha}^{\piso}\N^{2}),$$
we will consider some conditions on the system $(A,\mathbb{N}^{2},\alpha)$ so that we can apply the results of \cite{ECH,W}.

Now, the following proposition identifies the elements of $U$.
\begin{prop}
\label{Ip}
Let $\pi:A\rightarrow B(H)$ be a nonzero irreducible representation of $A$ such that $P=\ker \pi$. If
$(\tilde{\pi},V)$ is the pair defined in \cite[Example 4.3]{SZ3} (see \S\ref{sec:pre}), then the corresponding representation
$\tilde{\pi}\times V$ of $(A\times_{\alpha}^{\piso}\N^{2},i_{A},i_{\N^{2}})$ is irreducible on $\ell^2(\N^{2})\otimes H$ which lives on $\K(\ell^{2}(\N^{2}))\otimes A$.
\end{prop}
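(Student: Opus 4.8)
The plan is to reduce both assertions — irreducibility and the fact that $\tilde{\pi}\times V$ lives on $\K(\ell^{2}(\N^{2}))\otimes A$ — to an analysis of the restriction of $\tilde{\pi}\times V$ to the essential ideal $\K(\ell^{2}(\N^{2}))\otimes A\subset A\times_{\alpha}^{\piso}\N^{2}$ provided by \cite[Corollary 3.13]{SZ4}. The mechanism is the standard ideal fact: if $I$ is an ideal of a $C^{*}$-algebra $B$ and $\rho:B\to B(H)$ is a representation whose restriction $\rho|_{I}$ is nonzero and irreducible, then $\rho(B)'\subseteq \rho(I)'=\C\,\id_{H}$, so $\rho$ is itself irreducible and, being nondegenerate on $I$, lives on $I$. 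Thus it suffices to identify $(\tilde{\pi}\times V)|_{\K(\ell^{2}(\N^{2}))\otimes A}$ and check it is irreducible.

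To carry this out I would first compute the images under $\tilde{\pi}\times V$ of the generators of the compact ideal. Writing $V_{t}=(\tilde{\pi}\times V)(i_{\N^{2}}(t))$ and $\tilde{\pi}(a)=(\tilde{\pi}\times V)(i_{A}(a))$, a direct calculation on $\ell^{2}(\N^{2},H)$ gives $V_{t}^{*}V_{t}=P_{\geq t}$, the projection onto $\{f:\operatorname{supp} f\subseteq t+\N^{2}\}$. Hence, for the two generators $(1,0),(0,1)$ of $\N^{2}$, the product $(1-V_{(1,0)}^{*}V_{(1,0)})(1-V_{(0,1)}^{*}V_{(0,1)})$ is exactly the projection $P_{0}$ onto the summand $\delta_{0}\otimes H$, and since $\alpha_{0}=\id$ one has $\tilde{\pi}(a)(\delta_{0}\otimes h)=\delta_{0}\otimes\pi(a)h$. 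Conjugating $P_{0}$ by the shifts, namely $V_{r}^{*}P_{0}V_{s}=e_{r,s}\otimes\id_{H}$, produces all matrix units of $\K(\ell^{2}(\N^{2}))$. Consequently the restriction of $\tilde{\pi}\times V$ to $\K(\ell^{2}(\N^{2}))\otimes A$ is, under the identification $\ell^{2}(\N^{2})\otimes H$, unitarily equivalent to $\id\otimes\pi$, where $\id$ is the identity representation of $\K(\ell^{2}(\N^{2}))$ on $\ell^{2}(\N^{2})$.

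It then remains to observe that $\id\otimes\pi$ is irreducible. Since the identity representation of $\K(\ell^{2}(\N^{2}))$ is irreducible and $\pi$ is irreducible on $H$, their external tensor product $\id\otimes\pi$ is an irreducible representation of $\K(\ell^{2}(\N^{2}))\otimes A$ on $\ell^{2}(\N^{2})\otimes H$; equivalently, the von Neumann algebra generated by $\K(\ell^{2}(\N^{2}))\otimes\pi(A)$ is $B(\ell^{2}(\N^{2}))\otimes\pi(A)''=B(\ell^{2}(\N^{2})\otimes H)$, so its commutant is $\C\,\id$. Combined with the ideal fact of the first paragraph, this yields that $\tilde{\pi}\times V$ is irreducible and lives on $\K(\ell^{2}(\N^{2}))\otimes A$, as claimed.

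The main obstacle I anticipate is the bookkeeping of the second paragraph: one must pin down precisely how the matrix units of $\K(\ell^{2}(\N^{2}))\otimes A$ are written in terms of $i_{A}$ and $i_{\N^{2}}$ inside the crossed product — matching the description underlying \cite[Corollary 3.13]{SZ4} — and verify that $\tilde{\pi}\times V$ sends these to the expected rank-one operators tensored with $\pi(\,\cdot\,)$. The conceptual steps, namely the ideal/irreducibility fact and the irreducibility of $\id\otimes\pi$, are routine once this identification is secured.
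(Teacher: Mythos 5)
Your proof is correct, and its core coincides with the paper's: both arguments hinge on identifying $(\tilde{\pi}\times V)|_{\K(\ell^{2}(\N^{2}))\otimes A}$ with $\id\otimes\pi$. The paper does this by evaluating $\tilde{\pi}\times V$ on the spanning elements $\xi_{(m,n)}^{(x,y)}(a)-\xi_{(m,n+1)}^{(x,y+1)}(\alpha_{(0,1)}(a))$ of \cite[Lemma 3.8]{SZ4} and invoking the explicit isomorphism $\varphi$ of \cite[Theorem 3.10]{SZ4} --- precisely the bookkeeping you flag as the remaining obstacle; your derivation of the matrix units by conjugating $P_{0}=(1-V_{(1,0)}^{*}V_{(1,0)})(1-V_{(0,1)}^{*}V_{(0,1)})$ with the shifts is an equivalent, somewhat more self-contained, way of organizing the same computation. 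Where you genuinely differ is the deduction of irreducibility: the paper argues concretely that every nonzero $f\in\ell^{2}(\N^{2},H)$ is cyclic for $\tilde{\pi}\times V$ (pick $(x,y)$ with $f(x,y)\neq 0$, use irreducibility of $\pi$ to make $f(x,y)$ cyclic for $\pi$, then apply the operators appearing in (\ref{eq6}) to sweep out the total set $e_{(m,n)}\otimes\pi(a)f(x,y)$), whereas you invoke two abstract facts: $\id\otimes\pi$ is irreducible by the commutation theorem for von Neumann tensor products, and a representation whose restriction to an ideal is nonzero and irreducible is itself irreducible, since $\rho(B)'\subseteq\rho(I)'=\C\,1$. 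Both are sound; your route is shorter and makes the logical structure (everything is controlled by the essential ideal) explicit, while the paper's cyclic-vector computation is elementary, self-contained, and its formulas are recycled later in the paper (compare (\ref{eq3}) in the proof of Lemma \ref{from-alpha1}). In effect, the paper's cyclic-vector argument is a hands-on proof, specialized to this situation, of the very ideal fact you quote.
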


\begin{proof}
We first show that the restriction of $\tilde{\pi}\times V$ to the (essential) ideal
$\K(\ell^{2}(\N^{2}))\otimes A\simeq \K(\ell^{2}(\N^{2})\otimes A)$ is the representation $\id\otimes\pi$. Let $\{e_{(m,n)}: (m,n)\in\N^{2}\}$ be the usual orthonormal
basis of $\ell^2(\N^{2})$, and $\xi_{(m,n)}^{(x,y)}(a)$ denote the element
$$i_{\N^{2}}(m,n)^{*} i_{A}(a) [1-i_{\N^{2}}(1,0)^{*}i_{\N^{2}}(1,0)]i_{\N^{2}}(x,y)$$
of the algebra $A\times_{\alpha}^{\piso}\N^{2}$. Recall that, by \cite[Lemma 3.8]{SZ4}, the elements of the form
\begin{align}
\label{span-K}
\xi_{(m,n)}^{(x,y)}(a)-\xi_{(m,n+1)}^{(x,y+1)}(\alpha_{(0,1)}(a))
\end{align}
span an (essential) ideal $L$ of $A\times_{\alpha}^{\piso}\N^{2}$ which is isomorphic to the algebra $\K(\ell^{2}(\N^{2}))\otimes A$ of compact operators via an isomorphism $\varphi$, such that
$$\varphi\big((e_{(m,n)}\otimes\overline{e_{(x,y)}})\otimes ab^{*}\big)
=\xi_{(m,n)}^{(x,y)}(ab^{*})-\xi_{(m,n+1)}^{(x,y+1)}(\alpha_{(0,1)}(ab^{*}))$$
for all $a,b\in A$ and $(m,n),(x,y)\in\N^{2}$ (see \cite[Theorem 3.10]{SZ4}), where $(e_{(m,n)}\otimes\overline{e_{(x,y)}})$ is the rank-one operator on $\ell^{2}(\N^{2})$ defined by
$g\mapsto \langle g|e_{(x,y)}\rangle e_{(m,n)}$. Now, by calculation on spanning elements, we have
\begin{eqnarray}
\label{eq6}
\begin{array}{l}
(\tilde{\pi}\times V)\bigg(\xi_{(m,n)}^{(x,y)}(ab^{*})-\xi_{(m,n+1)}^{(x,y+1)}(\alpha_{(0,1)}(ab^{*}))\bigg)(e_{(r,s)}\otimes h)\\
=[(\tilde{\pi}\times V)\circ\varphi]\big((e_{(m,n)}\otimes\overline{e_{(x,y)}})\otimes ab^{*}\big)(e_{(r,s)}\otimes h)\\
=(\id\otimes\pi)\big((e_{(m,n)}\otimes\overline{e_{(x,y)}})\otimes ab^{*}\big)(e_{(r,s)}\otimes h)
\end{array}
\end{eqnarray}
for all $a,b\in A$, $(m,n),(x,y),(r,s)\in\N^{2}$, and $h\in H$. Thus, $(\tilde{\pi}\times V)|_{\K(\ell^{2}(\N^{2}))\otimes A}=\id\otimes\pi$, from which, since the representation $\pi$ is nonzero,
it follows that $\tilde{\pi}\times V$ lives on the ideal $\K(\ell^{2}(\N^{2}))\otimes A$.

At last, to see that the representation $\tilde{\pi}\times V$ is irreducible, let $f$ be a nonzero vector in $\ell^2(\N^{2})\otimes H\simeq\ell^2(\N^{2},H)$. So, there is $(x,y)\in\N^{2}$ such that
$f(x,y)\neq 0$, and hence, since the representation $\pi$ is irreducible, the nonzero vector $f(x,y)$ of $H$ is a cyclic vector for $\pi$. Thus, it follows that
\begin{align}
\label{span2}
\ell^2(\N^{2})\otimes H=\clsp\big\{ e_{(m,n)}\otimes \big(\pi(a)f(x,y)\big): (m,n)\in\N^{2}, a\in A\big\}.
\end{align}
However, we have
$$e_{(m,n)}\otimes \big(\pi(a)f(x,y)\big)
=(\tilde{\pi}\times V)\bigg(\xi_{(m,n)}^{(x,y)}(a)-\xi_{(m,n+1)}^{(x,y+1)}(\alpha_{(0,1)}(a))\bigg)f\ \ (\textrm{see}\ (\ref{eq6})),$$
which implies that
\begin{align}
\label{eq1}
\ell^2(\N^{2})\otimes H=\clsp\big\{ (\tilde{\pi}\times V)(\xi)f: \xi\in A\times_{\alpha}^{\piso}\N^{2} \big\}.
\end{align}
So, every nonzero vector $f$ of $\ell^2(\N^{2})\otimes H$ is a cyclic vector for $\tilde{\pi}\times V$, and therefore, $\tilde{\pi}\times V$ is irreducible. This completes the proof.
\end{proof}

\begin{remark}
\label{map Ip}
Therefore, by Proposition \ref{Ip}, each element of $U$ is the kernel of an irreducible representation $\tilde{\pi}\times V$
induced by a primitive ideal $P=\ker \pi$ of $A$. So, we denote $\ker (\tilde{\pi}\times V)$ by $\ind P$, and hence,
$$U=\{\ind P: P\in\Prim A\},$$
which is homeomorphic to $\Prim A$ via the homeomorphism $P\mapsto \ind P$. This homeomorphism is obtained by the composition of
homeomorphisms
$$\ind P \in U\mapsto \ind P\cap (\K(\ell^{2}(\N^{2}))\otimes A)\in \Prim (\K(\ell^{2}(\N^{2}))\otimes A),$$
where
\begin{eqnarray*}
\begin{array}{rcl}
\ind P\cap (\K(\ell^{2}(\N^{2}))\otimes A)&=&\ker((\tilde{\pi}\times V)|_{\K(\ell^{2}(\N^{2}))\otimes A})\\
&=&\ker(\id\otimes\pi)=\K(\ell^{2}(\N^{2}))\otimes P,
\end{array}
\end{eqnarray*}
and
$$P\in \Prim A\mapsto \K(\ell^{2}(\N^{2}))\otimes P\in \Prim (\K(\ell^{2}(\N^{2}))\otimes A)\ \ \textrm{(the Rieffel homeomorphism)}.$$
\end{remark}

\begin{remark}
\label{simple}
One can immediately see that $A\times_{\alpha}^{\piso}\N^{2}$ is not simple as it contains the algebra
$\K(\ell^{2}(\N^{2}))\otimes A$ as a proper nonzero ideal and $A\neq0$.
\end{remark}

\section{The topology of $\Prim (A\times_{\alpha}^{\piso}\N^{2})$ when $A$ is abelian and separable}
\label{case 1}
First, recall that $\widehat{\Z}$, the dual of the group $\Z$, is isomorphic to $\TT$ via the map
$z\in\TT\mapsto \gamma_{z}\in \widehat{\Z}$, such that $\gamma_{z}(n)=z^{n}$ for all $n\in \Z$. Therefore,
$\widehat{\Z^{2}}\simeq \TT^{2}$ via the isomorphism $(z_{1},z_{2})\in\TT^{2}\mapsto \gamma_{(z_{1},z_{2})}\in \widehat{\Z^{2}}$, such that
$$\gamma_{(z_{1},z_{2})}(m,n)=z_{1}^{m}z_{2}^{n}=\gamma_{z_{1}}(m)\gamma_{z_{2}}(n).$$

Now, if in the system $(A,\mathbb{N}^{2},\alpha)$, $A$ is abelian and separable, then
$(B_{\Z^{2}}\otimes A)\rtimes_{\tau\otimes\alpha^{-1}}\Z^{2}$ is isomorphic to the crossed product $C_{0}(X)\rtimes_{\lt} \Z^{2}$
associated with the second countable locally compact transformation group $(\Z^{2},X)$, where $X=\Delta(B_{\Z^{2}}\otimes A)$ is the
spectrum of the (abelian) algebra $B_{\Z^{2}}\otimes A$.
Thus, by Theorem \ref{Will th},
$$\Prim ((B_{\Z^{2}}\otimes A)\rtimes_{\tau\otimes\alpha^{-1}}\Z^{2})\simeq \Prim (A\times_{\alpha}^{\piso}\N^{2})$$
is homeomorphic to the quotient space $X\times \TT^{2}/\sim$.
In order to get a precise description of $X\times \TT^{2}/\sim$, firstly, since
$$B_{\Z^{2}}=B_{(\Z\times \Z)}\simeq B_{\Z}\otimes B_{\Z}\ \ \big(\textrm{this isomorphisms intertwines the actions}\ \tau\ \textrm{and}\ (\lt\otimes\lt)\big),$$
it follows by \cite[Theorem B.45]{RW} (or \cite[Theorem B.37]{RW}) that
\begin{align}
\label{X-sptrm}
X\simeq \Delta(B_{\Z}) \times \Delta(B_{\Z}) \times \Delta(A).
\end{align}
Moreover, by \cite[Lemma 3.3]{LZ}, $\Delta(B_{\Z})$ is homeomorphic to the open dense subset $\mathbb{Z} \cup \{\infty\}$ of the
two-point compactification $\{-\infty\} \cup \mathbb{Z} \cup \{\infty\}$ of $\Z$. Therefore, we actually need to describe
\begin{align}
\label{QS}
\big[(\Z \cup \{\infty\}) \times (\Z \cup \{\infty\}) \times \Delta(A)\big] \times \TT^{2}/\sim
\end{align}
To do so, we first need to see that how the group $\Z^{2}$ acts on the product space
\begin{align}
\label{prod}
X\simeq (\Z \cup \{\infty\}) \times (\Z \cup \{\infty\}) \times \Delta(A).
\end{align}
For every $(m,n)\in \Z^{2}$, $\phi\in \Delta(A)$, and $r,s\in\Z$, we have
$$(m,n)\cdot ((r,s),\phi)=((r+m,s+n),\phi).$$
This is due to the fact that $((r,s),\phi)$ is an element of the spectrum of the (essential) ideal
$$C_{0}(\Z^{2})\otimes A\simeq C_{0}(\Z)\otimes C_{0}(\Z)\otimes A$$
of the algebra $B_{\Z^{2}}\otimes A$, which is invariant under the action $\tau\otimes\alpha^{-1}$. Therefore, the
crossed product $(C_{0}(\Z^{2})\otimes A)\rtimes_{\tau\otimes\alpha^{-1}}\Z^{2}$ sits in the algebra
$(B_{\Z^{2}}\otimes A)\rtimes_{\tau\otimes\alpha^{-1}}\Z^{2}$ as an (essential) ideal (see \cite[Theorem 3.7]{SZ4}).
Furthermore, by \cite[Lemma 7.4]{W}, we have
$$(C_{0}(\Z^{2})\otimes A)\rtimes_{\tau\otimes\alpha^{-1}}\Z^{2}
\simeq (C_{0}(\Z^{2})\otimes A)\rtimes_{\tau\otimes\id}\Z^{2}\simeq \K(\ell^{2}(\Z^{2}))\otimes A.$$

Next, for every $(m,n)\in \Z^{2}$ and $\phi\in \Delta(A)$,
$$(m,n)\cdot ((\infty,\infty),\phi)=((\infty+m,\infty+n),\phi\circ\alpha_{(m,n)})
=((\infty,\infty),\phi\circ\alpha_{(m,n)}).$$

Finally, to see that how $\Z^{2}$ acts on the elements of the forms $((r,\infty),\phi)$ and $((\infty,r),\phi)$, where $r\in \Z$ and
$\phi\in \Delta(A)$, first note that the action $\alpha$ of $\Z^{2}$ induces two actions $\dot{\alpha}$ and $\ddot{\alpha}$
(corresponding to two generators of the group $\Z^{2}$) of (the subgroup) $\Z$ on $A$ by automorphisms, such that
$$\dot{\alpha}_{n}:=\alpha_{(n,0)}\ \ \textrm{and}\ \ \ddot{\alpha}_{n}:=\alpha_{(0,n)}$$
for every $n\in\Z$. Thus, there are two group crossed products $A\rtimes_{\dot{\alpha}}\Z$ and $A\rtimes_{\ddot{\alpha}}\Z$,
correspondingly. The primitive ideal spaces of them are quotients of the space $\Delta(A)\times \TT$ which we denote them by
$\Delta(A)\times \TT/\sim^{(1)}$ and $\Delta(A)\times \TT/\sim^{(2)}$, respectively.
Now, $((r,\infty),\phi)$ is an element of the spectrum of the (essential) ideal
$$(C_{0}(\Z)\otimes B_{\Z})\otimes A$$
of $B_{\Z^{2}}\otimes A\simeq (B_{\Z}\otimes B_{\Z})\otimes A$, which is indeed invariant under the action $\tau\otimes\alpha^{-1}$.
Moreover, there is an isomorphism
$$\psi:(C_{0}(\Z)\otimes B_{\Z})\otimes A\rightarrow C_{0}(\Z)\otimes (B_{\Z}\otimes A)$$
such that
$$((1_{x}-1_{x+1})\otimes 1_{y})\otimes a\mapsto (1_{x}-1_{x+1})\otimes (1_{y}\otimes \dot{\alpha}_{x}(a))$$
for all $x,y\in\Z$ and $a\in A$. By inspection on spanning elements, one can see that for every $(m,n)\in \Z^{2}$
the following diagram commutes:
\begin{equation*}
\begin{diagram}\dgARROWLENGTH=0.4\dgARROWLENGTH
\node{(C_{0}(\Z)\otimes B_{\Z})\otimes A} \arrow{s,l}{\tau_{(m,n)}\otimes\alpha_{(-m,-n)}}\arrow{e,t}{\psi}
\arrow{s}\arrow{e}\node{C_{0}(\Z)\otimes (B_{\Z}\otimes A)}
\arrow{s,r}{\lt_{m}\otimes(\lt_{n}\otimes \ddot{\alpha}_{-n})}\\
\node{(C_{0}(\Z)\otimes B_{\Z})\otimes A} \arrow{e,t}{\psi}
\node{C_{0}(\Z)\otimes (B_{\Z}\otimes A).}
\end{diagram}
\end{equation*}
Therefore, it follows that
$$(m,n)\cdot ((r,\infty),\phi)=((r+m,\infty+n),\phi\circ\ddot{\alpha}_{n})=((r+m,\infty),\phi\circ\alpha_{(0,n)})$$
for every $(m,n)\in \Z^{2}$, $r\in\Z$, and $\phi\in \Delta(A)$. Note that also, by \cite[Lemma 2.65]{W}, the isomorphism $\psi$
induces an isomorphism of the (essential) ideal
$$\big((C_{0}(\Z)\otimes B_{\Z})\otimes A\big)\rtimes_{\tau\otimes\alpha^{-1}}\Z^{2}$$
of $(B_{\Z^{2}}\otimes A)\rtimes_{\tau\otimes\alpha^{-1}}\Z^{2}$ onto the algebra
\begin{eqnarray*}
\begin{array}{l}
\big(C_{0}(\Z)\otimes (B_{\Z}\otimes A)\big)\rtimes_{\lt\otimes(\lt\otimes\ddot{\alpha}^{-1})}(\Z\times \Z)\\
\simeq \big[C_{0}(\Z)\rtimes_{\lt} \Z\big]\otimes\big[(B_{\Z}\otimes A)\rtimes_{\lt\otimes\ddot{\alpha}^{-1}}\Z\big]\\
\simeq \K(\ell^{2}(\Z))\otimes \big[(B_{\Z}\otimes A)\rtimes_{\lt\otimes\ddot{\alpha}^{-1}}\Z\big]
\end{array}
\end{eqnarray*}
of compact operators (see also \cite[Theorem 3.7]{SZ4} and \cite[Remark 3.11]{SZ4}). A similar discussion shows that
$$(m,n)\cdot ((\infty,r),\phi)=((\infty+m,r+n),\phi\circ\dot{\alpha}_{m})=((\infty,r+n),\phi\circ\alpha_{(m,0)})$$
for every $(m,n)\in \Z^{2}$, $r\in\Z$, and $\phi\in \Delta(A)$.

\begin{lemma}
\label{QS-lem}
The quotient space (\ref{QS}), as a set, is identified by the disjoint union of four sets
\begin{align}
\label{QS2}
\Delta(A)\sqcup \Prim(A\rtimes_{\dot{\alpha}}\Z)\sqcup \Prim(A\rtimes_{\ddot{\alpha}}\Z)\sqcup \Prim (A\rtimes_{\alpha}\Z^{2}),
\end{align}
where $\Prim (A\rtimes_{\alpha}\Z^{2})$ is a quotient of the product space $\Delta(A)\times \TT^{2}$.
\end{lemma}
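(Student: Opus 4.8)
The plan is to apply Theorem~\ref{Will th} to the transformation group $(\Z^{2},X)$ with $X\simeq(\Z\cup\{\infty\})\times(\Z\cup\{\infty\})\times\Delta(A)$, after splitting $X$ into the four $\Z^{2}$-invariant pieces determined by which of the first two coordinates equals $\infty$. Since the statement only asks for an identification ``as a set'', it suffices to produce a bijection of the quotient (\ref{QS}) with the disjoint union (\ref{QS2}) that respects the equivalence relation (\ref{f2}); the topology is left to a later step.

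First I would set
\[
\begin{aligned}
X_{1}&=\Z\times\Z\times\Delta(A), & X_{2}&=\Z\times\{\infty\}\times\Delta(A),\\
X_{3}&=\{\infty\}\times\Z\times\Delta(A), & X_{4}&=\{\infty\}\times\{\infty\}\times\Delta(A),
\end{aligned}
\]
and observe, from the four action formulas computed above, that these sets partition $X$ and are each invariant: translation by $\Z^{2}$ keeps a coordinate in $\Z$ inside $\Z$ and fixes a coordinate equal to $\infty$, only twisting the $\Delta(A)$-factor by $\alpha$. Next, for a representative point of each piece I would compute the orbit closure and the stability group. Because each orbit is either a product set or has its first two coordinates constant, the closure is a product of coordinate closures; using that $\Delta(A)$ is Hausdorff (so points are closed) one gets, for $x=((r,s),\phi)\in X_{1}$, that $\overline{\Z^{2}\cdot x}=(\Z\cup\{\infty\})^{2}\times\{\phi\}$ with trivial stability group; for $x=((r,\infty),\phi)\in X_{2}$, that $\overline{\Z^{2}\cdot x}=(\Z\cup\{\infty\})\times\{\infty\}\times\overline{\{\phi\circ\ddot{\alpha}_{n}:n\in\Z\}}$ with stability group $\{0\}\times(\Z)_{\phi}$, where $(\Z)_{\phi}=\{n\in\Z:\phi\circ\ddot{\alpha}_{n}=\phi\}$; the symmetric statement for $X_{3}$ with $\dot{\alpha}$; and for $x=((\infty,\infty),\phi)\in X_{4}$, that $\overline{\Z^{2}\cdot x}=\{(\infty,\infty)\}\times\overline{\{\phi\circ\alpha_{(m,n)}:(m,n)\in\Z^{2}\}}$ with stability group $\{(m,n):\phi\circ\alpha_{(m,n)}=\phi\}$.

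Comparing the projections of these four closures onto the first two coordinates, namely $(\Z\cup\{\infty\})^{2}$, $(\Z\cup\{\infty\})\times\{\infty\}$, $\{\infty\}\times(\Z\cup\{\infty\})$, and $\{(\infty,\infty)\}$, which are pairwise distinct, shows that two points lying in different pieces can never have equal orbit closures and hence are never related by (\ref{f2}). Therefore the quotient (\ref{QS}) is the disjoint union of the four sub-quotients $X_{i}\times\TT^{2}/\!\sim$, and it remains to identify each of them. For $X_{1}$ the action is free, so the condition $\gamma|_{G_{x}}=\mu|_{G_{x}}$ is vacuous and (\ref{f2}) reduces to $\phi=\phi'$; the projection onto the $\Delta(A)$-coordinate is then a bijection of $X_{1}\times\TT^{2}/\!\sim$ onto $\Delta(A)$. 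For $X_{2}$, restricting $\gamma_{(z_{1},z_{2})}$ to the stability group $\{0\}\times(\Z)_{\phi}$ leaves only $z_{2}|_{(\Z)_{\phi}}$ and does not involve $z_{1}$, while equality of orbit closures becomes equality of the $\ddot{\alpha}$-orbit closures of $\phi$; thus $((r,\infty),\phi,(z_{1},z_{2}))\mapsto(\phi,z_{2})$ induces a bijection onto $\Delta(A)\times\TT/\!\sim^{(2)}\,=\Prim(A\rtimes_{\ddot{\alpha}}\Z)$. The piece $X_{3}$ is handled symmetrically to give $\Prim(A\rtimes_{\dot{\alpha}}\Z)$, and for $X_{4}$ the whole character $\gamma_{(z_{1},z_{2})}$ survives on the full $\Z^{2}$-stabilizer of $\phi$, so (\ref{f2}) becomes precisely the Williams relation of the system $(A,\Z^{2},\alpha)$ and the projection $((\infty,\infty),\phi,(z_{1},z_{2}))\mapsto(\phi,(z_{1},z_{2}))$ gives a bijection onto $\Delta(A)\times\TT^{2}/\!\sim\,\cong\Prim(A\rtimes_{\alpha}\Z^{2})$.

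The step I expect to require the most care is the computation of the orbit closures in the $\infty$-directions: one must check that moving a finite coordinate to $\infty$ in the closure does not secretly twist the $\Delta(A)$-factor (which is why the $X_{1}$ closure stays at constant $\phi$), whereas the genuine $\alpha$-twisting along the $\infty$-coordinates is exactly what produces the $\dot{\alpha}$-, $\ddot{\alpha}$-, and $\alpha$-orbit closures in the pieces $X_{2},X_{3},X_{4}$. Once the closures and stability groups are pinned down, matching the restriction of $\gamma_{(z_{1},z_{2})}$ to each stability group against the Williams data of the corresponding sub-crossed product is routine and yields the four summands.
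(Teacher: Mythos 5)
Your proposal is correct and follows essentially the same route as the paper's proof: both partition $X$ into the four $\Z^{2}$-invariant pieces according to which of the first two coordinates equal $\infty$, compute the orbit closures and stability groups in each piece, and match the resulting Williams data against $\Delta(A)$, $\Prim(A\rtimes_{\dot{\alpha}}\Z)$, $\Prim(A\rtimes_{\ddot{\alpha}}\Z)$, and $\Prim(A\rtimes_{\alpha}\Z^{2})$. Your explicit verification that points in different pieces are never equivalent (by comparing the projections of the orbit closures onto the first two coordinates) is a minor sharpening of the paper's assertion that each element can only be equivalent to one of the same type.
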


\begin{proof}
For every $r,s\in\Z$ and $\phi\in \Delta(A)$, the stability group of the element $((r,s),\phi)$ of (\ref{prod}) is the trivial subgroup
$\{(0,0)\}$, and its $\Z^{2}$-orbit is $\Z^{2}\times \{\phi\}=\Z\times\Z\times \{\phi\}$. So, the element $((r,s),\phi,(z_{1},z_{2}))$ of
the product space
\begin{align}
\label{prod2}
X\times \TT^{2}\simeq \big[(\Z \cup \{\infty\}) \times (\Z \cup \{\infty\}) \times \Delta(A)\big] \times \TT^{2}
\end{align}
can only be equivalent to an element $((m,n),\psi,(w_{1},w_{2}))$ of the same type, and since $\Delta(A)$ is Hausdorff, we have
\begin{eqnarray*}
\begin{array}{rcl}
((r,s),\phi,(z_{1},z_{2})) \sim ((m,n),\psi,(w_{1},w_{2})) &\Longleftrightarrow&
\overline{\Z^{2} \cdot ((r,s),\phi)}=\overline{\Z^{2} \cdot ((m,n),\psi)}\\
&\Longleftrightarrow&\overline{\Z^{2}\times \{\phi\}}=\overline{\Z^{2}\times \{\psi\}}\\
&\Longleftrightarrow&\overline{\Z^{2}}\times \overline{\{\phi\}}=\overline{\Z^{2}}\times \overline{\{\psi\}}\\
&\Longleftrightarrow&\overline{\Z^{2}}\times \{\phi\}=\overline{\Z^{2}}\times \{\psi\}.\\
\end{array}
\end{eqnarray*}
It thus follows that
$$((r,s),\phi,(z_{1},z_{2})) \sim ((m,n),\psi,(w_{1},w_{2})) \Longleftrightarrow \phi=\psi.$$
Therefore, all elements $((r,s),\phi,(z_{1},z_{2}))$ in which $\phi\in \Delta(A)$ is fixed and $(r,s)$ and $(z_{1},z_{2})$
are running in $\Z^{2}$ and $\TT^{2}$, respectively, are in the same equivalence class in (\ref{QS}), which can be
parameterized by $\phi \in \Delta(A)$.

Next, the stability group of the element $((\infty,\infty),\phi)$ of (\ref{prod}) equals the stability group $\Z^{2}_{\phi}$ of $\phi$
when $\Z^{2}$ acts on $\Delta(A)$ via the action $\alpha$ (corresponding to the crossed product $A\rtimes_{\alpha}\Z^{2}$).
Its $\Z^{2}$-orbit is $\{(\infty,\infty)\}\times (\Z^{2}\cdot\phi)$, where $\Z^{2}\cdot \phi$ is the $\Z^{2}$-orbit of $\phi$.
Moreover, the element $((\infty,\infty),\phi,(z_{1},z_{2}))$ of (\ref{prod2}) can only be equivalent to an element $((\infty,\infty),\psi,(w_{1},w_{2}))$ of the same type, and we have
\begin{eqnarray*}
\begin{array}{l}
((\infty,\infty),\phi,(z_{1},z_{2}))\sim ((\infty,\infty),\psi,(w_{1},w_{2}))\\
\Longleftrightarrow \overline{\Z^{2} \cdot((\infty,\infty),\phi)}=\overline{\Z^{2}\cdot ((\infty,\infty),\psi)}
\ \textrm{and}\ \gamma_{(z_{1},z_{2})}|_{\Z^{2}_{\phi}}=\gamma_{(w_{1},w_{2})}|_{\Z^{2}_{\phi}}\\
\Longleftrightarrow \overline{\{(\infty,\infty)\}\times (\Z^{2}\cdot\phi)}
=\overline{\{(\infty,\infty)\}\times (\Z^{2}\cdot\psi)}\ \textrm{and}\ \gamma_{(z_{1},z_{2})}|_{\Z^{2}_{\phi}}=\gamma_{(w_{1},w_{2})}|_{\Z^{2}_{\phi}}\\
\Longleftrightarrow \overline{\{(\infty,\infty)\}}\times \overline{\Z^{2}\cdot\phi}
=\overline{\{(\infty,\infty)\}}\times \overline{\Z^{2}\cdot\psi}\ \ \textrm{and}\ \gamma_{(z_{1},z_{2})}|_{\Z^{2}_{\phi}}=\gamma_{(w_{1},w_{2})}|_{\Z^{2}_{\phi}}\\
\Longleftrightarrow \{(\infty,\infty)\}\times \overline{\Z^{2}\cdot\phi}
=\{(\infty,\infty)\}\times \overline{\Z^{2}\cdot\psi}\ \ \textrm{and}\ \gamma_{(z_{1},z_{2})}|_{\Z^{2}_{\phi}}=\gamma_{(w_{1},w_{2})}|_{\Z^{2}_{\phi}}.\\
\end{array}
\end{eqnarray*}
Therefore,
\begin{eqnarray*}
\begin{array}{l}
((\infty,\infty),\phi,(z_{1},z_{2}))\sim ((\infty,\infty),\psi,(w_{1},w_{2}))\\
\Longleftrightarrow \overline{\Z^{2}\cdot\phi}=\overline{\Z^{2}\cdot\psi}\ \ \textrm{and}
\ \gamma_{(z_{1},z_{2})}|_{\Z^{2}_{\phi}}=\gamma_{(w_{1},w_{2})}|_{\Z^{2}_{\phi}},
\end{array}
\end{eqnarray*}
which implies that $((\infty,\infty),\phi,(z_{1},z_{2}))\sim ((\infty,\infty),\psi,(w_{1},w_{2}))$ precisely when the pairs
$(\phi,(z_{1},z_{2}))$ and $(\psi,(w_{1},w_{2}))$ are in the same equivalence class in the quotient space
$\Delta(A)\times \TT^{2}/\sim=\Prim (A\rtimes_{\alpha}\Z^{2})$. Consequently, we can parameterize the equivalence class of each element $((\infty,\infty),\phi,(z_{1},z_{2}))$ in (\ref{QS}) by the class of the pair $(\phi,(z_{1},z_{2}))$ in $\Prim (A\rtimes_{\alpha}\Z^{2})$.

At last, it is left to discuss on the parametrization of the equivalence classes of the elements of the forms
$((\infty,r),\phi,(z_{1},z_{2}))$ and $((r,\infty),\phi,(z_{1},z_{2}))$ in (\ref{QS}). We only do this for
$((\infty,r),\phi,(z_{1},z_{2}))$ as the parametrization of the other one follows similarly. Firstly, let $\dot{\Z}_{\phi}$ and
$\dot{\Z}\cdot \phi$ denote the stability group and the $\Z$-orbit of $\phi$, respectively, when the group $\Z$ acts on $\Delta(A)$
via the action $\dot{\alpha}$ (corresponding to the crossed product $A\rtimes_{\dot{\alpha}}\Z$). Then, the stability group of
the element $((\infty,r),\phi)$ of (\ref{prod}) is $\dot{\Z}_{\phi}\times \{0\}$, which is isomorphic to $\dot{\Z}_{\phi}$, and
its $\Z^{2}$-orbit is
$$\big(\{\infty\}\times \Z\big)\times \dot{\Z}\cdot \phi.$$
It therefore follows that the element $((\infty,r),\phi,(z_{1},z_{2}))$ can only be equivalent to an element
$((\infty,s),\psi,(w_{1},w_{2}))$ of the same type. Moreover,
\begin{eqnarray*}
\begin{array}{l}
((\infty,r),\phi,(z_{1},z_{2}))\sim ((\infty,s),\psi,(w_{1},w_{2}))\\
\Longleftrightarrow \overline{\Z^{2}\cdot((\infty,r),\phi)}=\overline{\Z^{2}\cdot ((\infty,s),\psi)}\ \ \textrm{and}
\ \gamma_{(z_{1},z_{2})}|_{(\dot{\Z}_{\phi}\times \{0\})}=\gamma_{(w_{1},w_{2})}|_{(\dot{\Z}_{\phi}\times \{0\})}\\
\Longleftrightarrow \overline{\big(\{\infty\}\times \Z\big)\times \dot{\Z}\cdot \phi}
=\overline{\big(\{\infty\}\times \Z\big)\times \dot{\Z}\cdot \psi}\ \ \textrm{and}
\ \gamma_{z_{1}}|_{\dot{\Z}_{\phi}}=\gamma_{w_{1}}|_{\dot{\Z}_{\phi}}\\
\Longleftrightarrow \overline{\{\infty\}\times \Z}\times \overline{\dot{\Z}\cdot \phi}
=\overline{\{\infty\}\times \Z}\times \overline{\dot{\Z}\cdot \psi}\ \ \textrm{and}
\ \gamma_{z_{1}}|_{\dot{\Z}_{\phi}}=\gamma_{w_{1}}|_{\dot{\Z}_{\phi}}\\
\Longleftrightarrow \overline{\dot{\Z}\cdot \phi}=\overline{\dot{\Z}\cdot \psi}\ \ \textrm{and}
\ \gamma_{z_{1}}|_{\dot{\Z}_{\phi}}=\gamma_{w_{1}}|_{\dot{\Z}_{\phi}}.
\end{array}
\end{eqnarray*}
This implies that $((\infty,r),\phi,(z_{1},z_{2}))\sim ((\infty,s),\psi,(w_{1},w_{2}))$ if and only if the pairs $(\phi,z_{1})$ and
$(\psi,w_{1})$ are in the same equivalence class in the quotient space $\Delta(A)\times \TT/\sim^{(1)}=\Prim (A\rtimes_{\dot{\alpha}}\Z)$.
Thus, the equivalence class of each element $((\infty,r),\phi,(z_{1},z_{2}))$ in (\ref{QS}) can be parameterized by the class of the pair
$(\phi,z_{1})$ in $\Prim (A\rtimes_{\dot{\alpha}}\Z)$. Note that a similar discussion shows that each element
$((r,\infty),\phi,(z_{1},z_{2}))$ can only be equivalent to an element of the same type, and its equivalence class in (\ref{QS})
is parameterized by the class of the pair $(\phi,z_{2})$ in $\Delta(A)\times \TT/\sim^{(2)}=\Prim (A\rtimes_{\ddot{\alpha}}\Z)$.
This completes the proof.
\end{proof}

We are now ready to describe the topology of $\Prim (A\times_{\alpha}^{\piso}\N^{2})$ precisely.
\begin{theorem}
\label{prim piso}
Let $(A,\N^{2},\alpha)$ be a dynamical system consisting of a separable abelian $C^{*}$-algebra $A$ and an action $\alpha$ of
$\N^{2}$ on $A$ by automorphisms. Then, $\Prim (A\times_{\alpha}^{\piso}\N^{2})$ is homeomorphic to the disjoint union (\ref{QS2})
equipped with the quotient topology in which the open sets are in the following four forms:
\begin{itemize}
\item[(a)] $O\subset \Delta(A)$, where $O$ is open in $\Delta(A)$;
\item[(b)] $O\cup W_{1}$, where $O$ is a nonempty open subset of $\Delta(A)$ and $W_{1}$ is an open set in $\Prim (A\rtimes_{\dot{\alpha}}\Z)$;
\item[(c)] $O\cup W_{2}$, where $O$ is a nonempty open subset of $\Delta(A)$ and $W_{2}$ is an open set in $\Prim (A\rtimes_{\ddot{\alpha}}\Z)$; and
\item[(d)] $O\cup W_{1}\cup W_{2}\cup W$, where $O$, $W_{1}$, and $W_{2}$ are nonempty open subsets of $\Delta(A)$,
$\Prim (A\rtimes_{\dot{\alpha}}\Z)$, and $\Prim (A\rtimes_{\ddot{\alpha}}\Z)$, respectively, and $W$ is an open set in $\Prim(A\rtimes_{\alpha}\Z^{2})$.
\end{itemize}

\end{theorem}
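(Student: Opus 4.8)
The plan is to transport the whole question through the homeomorphism $\Prim(A\times_{\alpha}^{\piso}\N^{2})\simeq (X\times\TT^{2})/\!\sim$ of Lemma \ref{QS-lem}, and to exploit that the surjection $\Phi$ of Theorem \ref{Will th} is not merely continuous but \emph{open} (\cite[Remark 8.40]{W}). Writing $X=(\Z\cup\{\infty\})\times(\Z\cup\{\infty\})\times\Delta(A)$ as in (\ref{prod}), I decompose it into the four $\Z^{2}$-invariant locally closed strata $X_{0},X_{1},X_{2},X_{3}$ on which (first coordinate, second coordinate) is, respectively, (finite, finite), $(\infty,\text{finite})$, $(\text{finite},\infty)$, $(\infty,\infty)$; by Lemma \ref{QS-lem} these produce the summands $Q_{0}=\Delta(A)$, $Q_{1}=\Prim(A\rtimes_{\dot{\alpha}}\Z)$, $Q_{2}=\Prim(A\rtimes_{\ddot{\alpha}}\Z)$, $Q_{3}=\Prim(A\rtimes_{\alpha}\Z^{2})$ of (\ref{QS2}). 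First I would record the topology of $\Z\cup\{\infty\}$: every integer is isolated, while the neighbourhoods of $\infty$ are the sets $\{m:m>M\}\cup\{\infty\}$; hence $\overline{\Z}=\Z\cup\{\infty\}$ in each factor, whereas $\{\infty\}$ is closed.

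Next I would compute the closures of the strata inside the quotient. Using that for an open continuous surjection $\Phi$ one has $\Phi^{-1}(\overline{S})=\overline{\Phi^{-1}(S)}$ for every $S$, the closure of each $Q_{i}$ is read off from $\overline{\Phi^{-1}(Q_{i})}=\overline{X_{i}\times\TT^{2}}=\overline{X_{i}}\times\TT^{2}$. Since $\overline{X_{0}}=X$, $\overline{X_{1}}=X_{1}\cup X_{3}$, $\overline{X_{2}}=X_{2}\cup X_{3}$ and $\overline{X_{3}}=X_{3}$, this yields
\[
\overline{Q_{0}}=\Prim(A\times_{\alpha}^{\piso}\N^{2}),\quad \overline{Q_{1}}=Q_{1}\cup Q_{3},\quad \overline{Q_{2}}=Q_{2}\cup Q_{3},\quad \overline{Q_{3}}=Q_{3}.
\]
In particular $Q_{0}$ is dense (consistent with Remark \ref{map Ip}, where $Q_{0}\simeq\Prim A=\Delta(A)$ is the open dense set $U$), $Q_{3}\subseteq\overline{Q_{1}}\cap\overline{Q_{2}}$, and $Q_{1},Q_{2}\subseteq\overline{Q_{0}}$.

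From this I would deduce the shapes (a)--(d). For an open set $\mathcal{V}$ its trace $V_{i}:=\mathcal{V}\cap Q_{i}$ is automatically open in the subspace $Q_{i}$, and via the ideal filtration $\K(\ell^{2}(\N^{2}))\otimes A\subseteq\ker q\subseteq A\times_{\alpha}^{\piso}\N^{2}$ of \cite[Corollary 3.13]{SZ4} together with the Rieffel homeomorphism, each $Q_{i}$ carries exactly the intrinsic topology of the corresponding primitive ideal space. Density of $Q_{0}$ forces $V_{0}\neq\emptyset$ whenever $\mathcal{V}\neq\emptyset$; the inclusions $Q_{1},Q_{2}\subseteq\overline{Q_{0}}$ add nothing further, but $Q_{3}\subseteq\overline{Q_{1}}$ and $Q_{3}\subseteq\overline{Q_{2}}$ force $V_{1}\neq\emptyset$ and $V_{2}\neq\emptyset$ as soon as $V_{3}\neq\emptyset$. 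Sorting by which strata $\mathcal{V}$ meets then gives precisely the four cases: $\mathcal{V}\subseteq Q_{0}$ (a); $\mathcal{V}$ meets $Q_{1}$ but not $Q_{2},Q_{3}$ (b); symmetrically (c); and $\mathcal{V}$ meeting $Q_{3}$, hence all four strata (d), where the case $W=\emptyset$ absorbs an open set meeting $Q_{1}$ and $Q_{2}$ but not $Q_{3}$.

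For the converse I would exhibit, for data of the form (a)--(d), a saturated open subset of $X\times\TT^{2}$ whose image under $\Phi$ is the prescribed set; since $\Phi$ is open it then suffices to check that this preimage is open. Over $Q_{0}$ the preimage is the genuinely open box $\Z^{2}\times O\times\TT^{2}$, and over the $\infty$-strata one takes the $\Phi$-preimages of $W_{1},W_{2},W$. The only points at which openness is in question are those lying over $Q_{3}$ (and, to a lesser extent, over $Q_{1},Q_{2}$): a basic neighbourhood of a point with an $\infty$-coordinate necessarily contains large-integer points $\{m>M\}$, which slide it down into the lower strata. Verifying that these neighbourhoods stay inside the preimage is the crux, and it is exactly here that the nonemptiness requirements in (b)--(d) — and the implicit compatibility that $O$, $W_{1}$, $W_{2}$ contain the projections (the ``shadows'') of the higher pieces — are both forced and used. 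This gluing at the $\infty$-coordinates is the main obstacle; once it is handled, openness of $\Phi$ closes the argument.
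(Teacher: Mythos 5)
Your first half --- the argument that \emph{every} open subset of $\Prim(A\times_{\alpha}^{\piso}\N^{2})$ must have one of the shapes (a)--(d) --- is correct, and it is a genuinely different route from the paper's. The paper pushes the basis of boxes (\ref{basis}) through the quotient map (open by Theorem \ref{Will th} and \cite[Remark 8.40]{W}) and reads the four shapes off the images $O$, $O\cup \textsf{q}_{1}(O\times V_{1})$, $O\cup \textsf{q}_{2}(O\times V_{2})$, $O\cup \textsf{q}_{1}(O\times V_{1})\cup \textsf{q}_{2}(O\times V_{2})\cup \textsf{q}(O\times(V_{1}\times V_{2}))$, in which the \emph{same} $O$ appears in every constituent. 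You avoid computing images altogether: the strata are saturated, the closure relations $\overline{Q_{0}}=\Prim(A\times_{\alpha}^{\piso}\N^{2})$, $\overline{Q_{i}}=Q_{i}\cup Q_{3}$ ($i=1,2$), $\overline{Q_{3}}=Q_{3}$ follow from $\Phi^{-1}(\overline{S})=\overline{\Phi^{-1}(S)}$ (valid for open continuous surjections), and each $Q_{i}$ carries its intrinsic primitive-ideal topology by the filtration $\K(\ell^{2}(\N^{2}))\otimes A\subset \ker q\subset A\times_{\alpha}^{\piso}\N^{2}$ and the Rieffel correspondence. This part is sound, and in fact more robust than the paper's computation, because it never uses the explicit action formulas on the product model.

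The genuine gap is your converse, and it sits exactly where you flagged it: the ``gluing at the $\infty$-coordinates'' is announced as the crux but never carried out. It cannot be carried out, because the converse is false for arbitrary data of the forms (b)--(d); the ``shadow'' compatibility you noticed is an additional, generally nonvacuous hypothesis, not something implicit in the statement. Concretely, take $A=C(\TT)$, let $\dot{\alpha}$ be rotation by an irrational angle (so $A\rtimes_{\dot{\alpha}}\Z$ is the simple irrational rotation algebra and $\Prim(A\rtimes_{\dot{\alpha}}\Z)=\{p\}$ is one point) and $\ddot{\alpha}=\id$. Form (b) would make $O\cup\{p\}$ open for every nonempty open $O\subset\Delta(A)=\TT$. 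But in this system $\ind P\subset \dot{\J}_{p}$ for every $P\in\Prim A$: by simplicity of $A\rtimes_{\dot{\alpha}}\Z$ and minimality of the rotation, any ideal of $A\times_{\alpha}^{\piso}\N^{2}$ contained in $\I_{1}$ whose intersection with $\K(\ell^{2}(\N^{2}))\otimes A$ is proper must lie inside $\K(\ell^{2}(\N^{2}))\otimes A$; applying this to $\ind P\cap\I_{1}$, and using that the representation $\dot{\rho}\times\dot{W}$ of Lemma \ref{from-alpha1} lives on $\I_{1}$ and kills $\K(\ell^{2}(\N^{2}))\otimes A$, gives $\ind P\subset\dot{\J}_{p}$. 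Hence every open set containing $\dot{\J}_{p}$ contains all of $\Delta(A)$, and $O\cup\{p\}$ is open only when $O=\Delta(A)$. So no completion of your sketch exists; your proposal proves exactly the half of the theorem that is true.

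You should also be aware that the paper's own proof does not close this gap either: it only exhibits the compatible sets above (same $O$ throughout), and even those image computations are made in the stratum-wise ``untwisted'' coordinates introduced before Lemma \ref{QS-lem}, in which the displayed $\Z^{2}$-action is not continuous across strata (finite points $((r,s),\phi)$ keep $\phi$ fixed, while the nearby points of the $\infty$-strata have $\phi$ moved by $\alpha$), so openness of $\Phi$ cannot legitimately be invoked for that model. The correct statement of the converse requires exactly the dynamical compatibility you glimpsed: $O$ must contain the (suitably saturated) projections of $\textsf{q}_{1}^{-1}(W_{1})$, $\textsf{q}_{2}^{-1}(W_{2})$, $\textsf{q}^{-1}(W)$, and similarly between $W_{1},W_{2}$ and $W$. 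Making that precise, rather than treating it as an implicit remark, is what a correct proof (and a correct statement of the theorem) needs.
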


\begin{proof}
Assume that $\tilde{\textsf{q}}$ is the quotient map of the product space
\begin{align}
\label{prod3}
\big[(\Z \cup \{\infty\}) \times (\Z \cup \{\infty\}) \times \Delta(A)\big] \times (\TT \times \TT)
\end{align}
onto (\ref{QS2}).
Let $\textsf{q}_{1}:\Delta(A)\times \TT \rightarrow \Prim (A\rtimes_{\dot{\alpha}}\Z)$,
$\textsf{q}_{2}:\Delta(A)\times \TT \rightarrow \Prim (A\rtimes_{\ddot{\alpha}}\Z)$, and
$\textsf{q}:\Delta(A)\times \TT^{2} \rightarrow \Prim (A\rtimes_{\alpha}\Z^{2})$ be the quotient maps. Recall that these quotient maps
are all open (see \cite[Remark 8.40]{W}). Let $\widetilde{\mathfrak{B}}$ be the set of all elements
\begin{align}
\label{basis}
U_{1} \times U_{2} \times O \times (V_{1}\times V_{2}),
\end{align}
where each $U_{i}$ is either $\{n_{i}\}$ or $J_{n_{i}}=\{n_{i},n_{i}+1,n_{i}+2,...\}\cup \{\infty\}$ for some $n_{i}\in\Z$
(see \cite[Lemma 3.3]{LZ}), $O$ is an open set in $\Delta(A)$, and each $V_{i}$ is an open subset of $\TT$. Obviously,
$\widetilde{\mathfrak{B}}$ is a basis for the topology of the product space (\ref{prod3}). Therefore, the forward image of the
elements of $\widetilde{\mathfrak{B}}$ by $\tilde{\textsf{q}}$ forms a basis for the quotient topology of (\ref{QS2}) which we denote it by $\mathfrak{B}$. But, first note that, since each $U_{i}$ has two forms, the elements of $\widetilde{\mathfrak{B}}$ have
totally four forms as follow:
\begin{itemize}
\item[(i)] $\{n_{1}\} \times \{n_{2}\} \times O \times (V_{1}\times V_{2})$;
\item[(ii)] $J_{n_{1}} \times \{n_{2}\} \times O \times (V_{1}\times V_{2})$;
\item[(iii)] $\{n_{1}\} \times J_{n_{2}} \times O \times (V_{1}\times V_{2})$; and
\item[(iv)] $J_{n_{1}} \times J_{n_{2}} \times O \times (V_{1}\times V_{2})$.
\end{itemize}
Therefore, accordingly, $\mathfrak{B}$ is the union of the following four sets (see Lemma \ref{QS-lem}):
$$\mathfrak{B}_{1}:=\big\{O\subset\Delta(A): O\ \textrm{is open in}\ \Delta(A)\big\},$$
$$\mathfrak{B}_{2}:=\big\{O\cup \textsf{q}_{1}(O\times V_{1}): O\ \textrm{is a nonempty open subset of}\ \Delta(A),\ \textrm{and}
\ V_{1}\ \textrm{is open in}\ \TT\big\},$$
$$\mathfrak{B}_{3}:=\big\{O\cup \textsf{q}_{2}(O\times V_{2}): O\ \textrm{is a nonempty open subset of}\ \Delta(A),\ \textrm{and}
\ V_{2}\ \textrm{is open in}\ \TT\big\},$$ and
$$\mathfrak{B}_{4}:=\big\{O\cup \textsf{q}_{1}(O\times V_{1})\cup \textsf{q}_{2}(O\times V_{2})\cup \textsf{q}(O\times (V_{1}\times V_{2})):$$
$$O\ \textrm{is a nonempty open subset of}\ \Delta(A),\ \textrm{and each}
\ V_{i}\ \textrm{is a nonempty open subset of}\ \TT\big\}.$$
So, the rest follows from the facts that the open sets $\textsf{q}_{1}(O\times V_{1})$, $\textsf{q}_{2}(O\times V_{2})$, and
$\textsf{q}(O\times (V_{1}\times V_{2}))$ form bases for the topological spaces $\Prim (A\rtimes_{\dot{\alpha}}\Z)$,
$\Prim (A\rtimes_{\ddot{\alpha}}\Z)$, $\Prim (A\rtimes_{\alpha}\Z^{2})$, respectively.
\end{proof}

\begin{remark}
\label{rmk3}
Recall that the primitive ideals of $A\times_{\alpha}^{\piso}\N^{2}$ derived from $\Prim(A\rtimes_{\alpha}\Z^{2})$ form a closed subset
of $\Prim (A\times_{\alpha}^{\piso}\N^{2})$ (see (\ref{ext.seq.1})), which is
$$F:=\{\J\in\Prim (A\times_{\alpha}^{\piso}\N^{2}): \ker q\subset\J\}.$$
Under the conditions of Theorem \ref{prim piso}, these ideals are actually the kernels of the irreducible representations $\big(\Ind_{\Z^{2}_{\phi}}^{\Z^{2}}(\phi \rtimes \gamma_{(z,w)}|_{\Z^{2}_{\phi}})\big)\circ q$ corresponding to the elements (equivalence classes)
$[(\phi,(z,w))]$ of $\Delta(A)\times \TT^{2}/\sim =\Prim (A\rtimes_{\alpha}\Z^{2})$. We denote
$\ker \big([\Ind_{\Z^{2}_{\phi}}^{\Z^{2}}(\phi \rtimes \gamma_{(z,w)}|_{\Z^{2}_{\phi}})]\circ q\big)$ by $\J_{[(\phi,(z,w))]}$, and therefore,
$$F=\{\J_{[(\phi,(z,w))]}: \phi\in\Delta(A), (z,w)\in\TT^{2}\}.$$
Also, the homeomorphism of $\Prim(A\rtimes_{\alpha}\Z^{2})$ onto $F$ is given by the map $$[(\phi,(z,w))]\mapsto \J_{[(\phi,(z,w))]}.$$
\end{remark}

Next, we want to identify the primitive ideals of $A\times_{\alpha}^{\piso}\N^{2}$ derived from $\Prim (A\rtimes_{\dot{\alpha}}\Z)$
and $\Prim (A\rtimes_{\ddot{\alpha}}\Z)$, respectively, under the conditions of Theorem \ref{prim piso}. Consider the semigroup dynamical
system $(A,\N,\dot{\alpha})$, corresponding to which, there is the following short exact sequence
\begin{align}
\label{ext.seq.2}
0 \longrightarrow \K(\ell^{2}(\N))\otimes A \stackrel{}{\longrightarrow} A\times_{\dot{\alpha}}^{\piso}\N
\stackrel{\dot{q}}{\longrightarrow} A\rtimes_{\dot{\alpha}}\Z \longrightarrow 0
\end{align}
of $C^*$-algebras (see \cite{AZ,SZ}). So, corresponding to each element (equivalent class) $[(\phi,z)]$ of
$\Delta(A)\times \TT/\sim^{(1)}\simeq \Prim (A\rtimes_{\dot{\alpha}}\Z)$, the composition
\begin{align}
\label{pi.dot}
\big(\Ind_{\dot{\Z}_{\phi}}^{\Z}(\phi \rtimes \gamma_{z}|_{\dot{\Z}_{\phi}})\big)\circ \dot{q}
\end{align}
gives a nonzero irreducible representation $\dot{\pi}:A\times_{\dot{\alpha}}^{\piso}\N\rightarrow B(H)$ of
$(A\times_{\dot{\alpha}}^{\piso}\N, j_{A},v)$ on a Hilbert space $H$, where $\dot{\Z}_{\phi}$ denotes the stability group
of $\phi$ when the group $\Z$ acts on $\Delta(A)$ via the action $\dot{\alpha}$. It follows that
$\K(\ell^{2}(\N))\otimes (\ker \dot{\pi})$ is a primitive ideal of the algebra $\K(\ell^{2}(\N))\otimes (A\times_{\dot{\alpha}}^{\piso}\N)$,
which by \cite[Corollary 3.12]{SZ4} sits in $A\times_{\alpha}^{\piso}\N^{2}$ as an (essential) ideal $\I_{1}$ (more
precisely, $\I_{1}$ is the ideal $\I_{\gamma}$ in \cite{SZ4}). Also, $\I_{1}$ contains the algebra
$$\K(\ell^{2}(\N^{2}))\otimes A\simeq \K(\ell^{2}(\N)\otimes \ell^{2}(\N))\otimes A
\simeq \K(\ell^{2}(\N))\otimes[\K(\ell^{2}(\N))\otimes A]$$
of compact operators as an (essential) ideal (see again \cite{SZ4}). Recall that the map $T:\N\rightarrow B(\ell^{2}(\N))$ defined by
$T_{n}(e_{m})=e_{m+n}$ on the usual orthonormal basis $\{e_{m}: m\in \N\}$ of $\ell^{2}(\N)$ is a representation of $\N$
by isometries, such that
$$\K(\ell^{2}(\N))=\clsp\{T_{m}(1-TT^{*})T_{n}^{*}: m,n\in \N\}\ \ \ \ (T:=T_{1})$$
Indeed, for every $m,n\in\N$, $T_{m}(1-TT^{*})T_{n}^{*}$ is a rank-one operator on $\ell^{2}(\N)$
such that $f\mapsto \langle f| e_{n}\rangle e_{m}$. Now, the following lemma identifies the primitive ideals of
$A\times_{\alpha}^{\piso}\N^{2}$ coming from $\Prim (A\rtimes_{\dot{\alpha}}\Z)$.

\begin{lemma}
\label{from-alpha1}
Define the maps
$$\dot{\rho}:A\rightarrow B(\ell^{2}(\N)\otimes H)\ \ \textrm{and}\ \ \dot{W}:\N^{2}\rightarrow B(\ell^{2}(\N)\otimes H)$$
by
$$(\dot{\rho}(a)f)(n)=(\dot{\pi}\circ j_{A})(\ddot{\alpha}_{n}(a))f(n)\ \ \textrm{and}
\ \ \dot{W}_{(m,n)}=T_{n}^{*}\otimes \overline{\dot{\pi}}(v_{m}),$$
respectively, for all $a\in A$, $f\in \ell^{2}(\N)\otimes H$, and $m,n\in\N$. Then, the pair $(\dot{\rho},\dot{W})$ is a covariant
partial-isometric representation of the system $(A,\N^{2}, \alpha)$ on the Hilbert space $\ell^{2}(\N)\otimes H\simeq \ell^{2}(\N,H)$,
such that the corresponding (nondegenerate) representation $\dot{\rho}\times \dot{W}$ of $(A\times_{\alpha}^{\piso}\N^{2},i)$
is irreducible on $\ell^{2}(\N)\otimes H$, which lives on the ideal $\I_{1}\simeq \K(\ell^{2}(\N))\otimes (A\times_{\dot{\alpha}}^{\piso}\N)$
but vanishes on $\K(\ell^{2}(\N^{2}))\otimes A$.
\end{lemma}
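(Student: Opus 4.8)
The plan is to realize $\dot\rho\times\dot W$ through the universal property of $A\times_\alpha^{\piso}\N^2$ and then to compute its restriction to the ideal $\I_1$. Concretely, I would (1) verify that $(\dot\rho,\dot W)$ satisfies the axioms of a covariant partial-isometric representation of $(A,\N^2,\alpha)$ as in \cite{SZ3}; (2) identify the resulting representation $\dot\rho\times\dot W$ on $\I_1\cong\K(\ell^2(\N))\otimes(A\times_{\dot\alpha}^{\piso}\N)$ with the amplification $\id\otimes\dot\pi$; and (3) read off the three assertions (lives on $\I_1$, vanishes on $\K(\ell^2(\N^2))\otimes A$, irreducible) from the corresponding properties of $\dot\pi$.

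For step (1), nondegeneracy of $\dot\rho$ is immediate from nondegeneracy of the representation $\dot\pi\circ j_A$ of $A$ together with the diagonal form of $\dot\rho$. To see that $\dot W$ is a partial-isometric representation of $\N^2$, I would write $\dot W_{(m,n)}=(1\otimes\overline{\dot\pi}(v_m))(T_n^*\otimes 1)=T_n^*\otimes\overline{\dot\pi}(v_m)$ (using $v_0=1$ and $T_0=1$) and use that $m\mapsto\overline{\dot\pi}(v_m)$ and $n\mapsto T_n^*$ are partial-isometric representations of $\N$ on $H$ and $\ell^2(\N)$ respectively, the former being the image under $\overline{\dot\pi}$ of the canonical partial-isometric representation $v$ of $\N$ in $M(A\times_{\dot\alpha}^{\piso}\N)$. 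The tensor-leg factorization then gives $\dot W_{(m,n)}\dot W_{(m',n')}=T_{n+n'}^*\otimes\overline{\dot\pi}(v_{m+m'})=\dot W_{(m+m',n+n')}$, and each $\dot W_{(m,n)}$ is a partial isometry as a tensor product of partial isometries.

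Still in step (1), the covariance relation $\dot W_{(m,n)}\dot\rho(a)=\dot\rho(\alpha_{(m,n)}(a))\dot W_{(m,n)}$ I would check on the two generating families separately, using $\alpha_{(m,n)}=\dot\alpha_m\ddot\alpha_n$ and $\dot\alpha\ddot\alpha=\ddot\alpha\dot\alpha$. For the $(0,n)$-generators, $\dot W_{(0,n)}=T_n^*\otimes 1$ implements the backward shift $(\dot W_{(0,n)}f)(k)=f(k+n)$ on $\ell^2(\N,H)$, and the $\ddot\alpha_n$-twist built into $\dot\rho$ matches the index shift: both sides carry $f$ to $k\mapsto(\dot\pi\circ j_A)(\ddot\alpha_{k+n}(a))f(k+n)$. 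For the $(m,0)$-generators, $\dot W_{(m,0)}=1\otimes\overline{\dot\pi}(v_m)$ acts only in the $H$-leg, and the required identity reduces componentwise (with $b=\ddot\alpha_k(a)$) to $\overline{\dot\pi}(v_m)(\dot\pi\circ j_A)(b)=(\dot\pi\circ j_A)(\dot\alpha_m(b))\overline{\dot\pi}(v_m)$, which is exactly the image under $\dot\pi$ of the covariance relation $v_m j_A(b)=j_A(\dot\alpha_m(b))v_m$ holding in $A\times_{\dot\alpha}^{\piso}\N$. I would likewise verify that each range projection $\dot W_{(m,n)}^*\dot W_{(m,n)}=(T_nT_n^*)\otimes\overline{\dot\pi}(v_m^*v_m)$ commutes with $\dot\rho(A)$, which follows from the diagonal form of $\dot\rho$ and from $v_m^*v_m$ commuting with $j_A(A)$.

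Granting this, universality produces the nondegenerate representation $\dot\rho\times\dot W$ of $A\times_\alpha^{\piso}\N^2$, and the crux — the step I expect to be the main obstacle — is to identify its restriction to $\I_1\cong\K(\ell^2(\N))\otimes(A\times_{\dot\alpha}^{\piso}\N)$ with $\id_{\K(\ell^2(\N))}\otimes\dot\pi$. This is the analogue of the computation (\ref{eq6}) in the proof of Proposition \ref{Ip}, but is more delicate because the second tensor leg is now the full crossed product $A\times_{\dot\alpha}^{\piso}\N$ rather than just $A$; I would carry it out by evaluating $\dot\rho\times\dot W$ on the spanning elements $T_m(1-TT^*)T_n^*\otimes d$ ($d\in A\times_{\dot\alpha}^{\piso}\N$) of $\I_1$ under the isomorphism of \cite[Corollary 3.12]{SZ4} and matching it with the action of $\id\otimes\dot\pi$ on $\ell^2(\N)\otimes H$. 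Once this identification is in hand the three conclusions are immediate. Since $\dot\pi\neq 0$, the amplification $\id\otimes\dot\pi$ is a nonzero nondegenerate representation of $\I_1$, so $\dot\rho\times\dot W$ lives on $\I_1$. Because $\dot\pi=(\Ind_{\dot\Z_\phi}^{\Z}(\phi\rtimes\gamma_z|_{\dot\Z_\phi}))\circ\dot q$ factors through $\dot q$, it annihilates $\ker\dot q=\K(\ell^2(\N))\otimes A$, whence $\id\otimes\dot\pi$ annihilates $\K(\ell^2(\N))\otimes\ker\dot q=\K(\ell^2(\N^2))\otimes A$, so $\dot\rho\times\dot W$ vanishes there. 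Finally, the identity representation of $\K(\ell^2(\N))$ on $\ell^2(\N)$ is irreducible and $\dot\pi$ is irreducible, so $\id\otimes\dot\pi$ is an irreducible representation of $\I_1$ on all of $\ell^2(\N)\otimes H$; as $\I_1$ is an ideal on which $\dot\rho\times\dot W$ restricts irreducibly, the commutant of $(\dot\rho\times\dot W)(A\times_\alpha^{\piso}\N^2)$ is contained in that of $(\id\otimes\dot\pi)(\I_1)=\C1$, and therefore $\dot\rho\times\dot W$ is irreducible.
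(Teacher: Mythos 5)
Your proposal is correct, and most of it runs parallel to the paper's proof: the covariance check that the paper dismisses as ``routine calculations'' you actually sketch (your reduction of the covariance identity to the relation $v_m j_A(b)=j_A(\dot{\alpha}_m(b))v_m$ in $M(A\times_{\dot{\alpha}}^{\piso}\N)$, together with the index-shift matching for the $(0,n)$-generators, is exactly the right mechanism); your crux step, identifying $(\dot{\rho}\times\dot{W})|_{\I_{1}}$ with $\id\otimes\dot{\pi}$ by evaluating on the spanning elements $[T_{n}(1-TT^{*})T_{y}^{*}]\otimes[v_{m}^{*}j_{A}(a)v_{x}]$ under the isomorphism $\Psi_{1}$, is precisely the computation the paper performs in (\ref{eq4})--(\ref{eq5}); and your arguments for ``lives on $\I_{1}$'' and for vanishing on $\K(\ell^{2}(\N^{2}))\otimes A$ (via $\ker(\id\otimes\dot{\pi})=\K(\ell^{2}(\N))\otimes\ker\dot{\pi}\supset\K(\ell^{2}(\N))\otimes\big(\K(\ell^{2}(\N))\otimes A\big)$) coincide with the paper's. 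Where you genuinely diverge is irreducibility. The paper proves it directly: given $0\neq f\in\ell^{2}(\N,H)$, it picks $y$ with $f(y)\neq 0$, uses irreducibility of $\dot{\pi}$ to make $f(y)$ cyclic, and then the explicit calculation (\ref{eq3}), namely $(\dot{\rho}\times\dot{W})\big(\eta_{(m,n)}^{(x,y)}(a)\big)f=e_{n}\otimes[\dot{\pi}(v_{m}^{*}j_{A}(a)v_{x})f(y)]$, shows every nonzero vector is cyclic. You instead deduce it abstractly: $\id\otimes\dot{\pi}$ is irreducible on $\ell^{2}(\N)\otimes H$ (tensor product of irreducibles, one leg being the identity representation of the compacts), and since $\I_{1}$ is an ideal, the commutant of $(\dot{\rho}\times\dot{W})(A\times_{\alpha}^{\piso}\N^{2})$ sits inside the commutant $\C 1$ of $(\id\otimes\dot{\pi})(\I_{1})$. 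Both arguments are valid. Your route buys economy: once the single identification $(\dot{\rho}\times\dot{W})|_{\I_{1}}=\id\otimes\dot{\pi}$ is established, all three conclusions (irreducibility, living on $\I_{1}$, vanishing on the compacts) fall out as corollaries, whereas the paper in effect runs the same spanning-element computation twice, once for cyclicity and once for the restriction. The cost is that you must invoke the standard facts that a tensor product of irreducible representations is irreducible and that irreducibility passes from an ideal to the whole algebra via commutants, while the paper's cyclic-vector argument is more elementary and self-contained.
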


\begin{proof}
Firstly, some routine calculations (on spanning elements) show that the pair $(\dot{\rho},\dot{W})$ is indeed a covariant
partial-isometric representation of $(A,\N^{2}, \alpha)$ on $\ell^{2}(\N)\otimes H$ which we skip them here.

Next, to see that the corresponding representation
$\dot{\rho}\times \dot{W}:(A\times_{\alpha}^{\piso}\N^{2},i)\rightarrow B(\ell^{2}(\N)\otimes H)$
is irreducible on $\ell^{2}(\N)\otimes H$, we show that every nonzero vector $f\in \ell^{2}(\N)\otimes H$ is a cyclic vector for
$\dot{\rho}\times \dot{W}$. Since $f\neq 0$, there is $y\in\N$ such that $f(y)$ is a nonzero vector in $H$. Therefore, since the
representation $\dot{\pi}$ is irreducible, $f(y)$ is a cyclic vector for $\dot{\pi}$, and hence, the elements
$$\big\{e_{n}\otimes [\dot{\pi}(v_{m}^{*}j_{A}(a)v_{x})f(y)]: a\in A, n,m,x\in\N \big\}$$
span the Hilbert space $\ell^{2}(\N)\otimes H$
(recall that $A\times_{\dot{\alpha}}^{\piso}\N=\clsp\big\{v_{m}^{*}j_{A}(a)v_{x}: a\in A, m,x\in\N\big\}$).
We show the each spanning element $e_{n}\otimes [\dot{\pi}(v_{m}^{*}j_{A}(a)v_{x})f(y)]$ belongs to
$$\clsp\big\{(\dot{\rho}\times \dot{W})(\eta)f: \eta\in A\times_{\alpha}^{\piso}\N^{2}\big\},$$
which implies that $f$ is cyclic for $\dot{\rho}\times \dot{W}$. Take the element
\begin{align}
\label{span_I1}
\eta_{(m,n)}^{(x,y)}(a):=i_{\N^{2}}(m,n)^{*}i_{A}(a)[1-i_{\N^{2}}(0,1)^{*}i_{\N^{2}}(0,1)]i_{\N^{2}}(x,y)
\end{align}
of $A\times_{\alpha}^{\piso}\N^{2}$. See in \cite[Lemma 3.8]{SZ4} that, in fact, the elements of the form (\ref{span_I1}) span
the (essential) ideal $\I_{1}$ of $A\times_{\alpha}^{\piso}\N^{2}$. Now, one can calculate to see that
\begin{eqnarray}\label{eq3}
\begin{array}{l}
(\dot{\rho}\times \dot{W})\bigg(\eta_{(m,n)}^{(x,y)}(a)\bigg)f\\
=\big(T_{n}\otimes \overline{\dot{\pi}}(v_{m}^{*})\big)\dot{\rho}(a)\big[1-(T\otimes 1)(T^{*}\otimes 1)\big]
\big(T_{y}^{*}\otimes \overline{\dot{\pi}}(v_{x})\big)f\\
=\big(T_{n}\otimes \overline{\dot{\pi}}(v_{m}^{*})\big)\dot{\rho}(a)\big[(1-TT^{*})\otimes 1\big]
\big(T_{y}^{*}\otimes \overline{\dot{\pi}}(v_{x})\big)f\\
=\big(T_{n}\otimes \overline{\dot{\pi}}(v_{m}^{*})\big)\dot{\rho}(a)\big[(1-TT^{*})T_{y}^{*}\otimes \overline{\dot{\pi}}(v_{x})\big]f\\
=e_{n}\otimes [\dot{\pi}(v_{m}^{*}j_{A}(a)v_{x})f(y)]
\end{array}
\end{eqnarray}
for all $a\in A$ and $n,m,x\in\N$. It thus follows that $f$ is a cyclic vector for $\dot{\rho}\times \dot{W}$.

To see that the restriction of $\dot{\rho}\times \dot{W}$ to the ideal $\I_{1}\simeq \K(\ell^{2}(\N))\otimes (A\times_{\dot{\alpha}}^{\piso}\N)$
is nonzero, we first show that the restriction $(\dot{\rho}\times \dot{W})|_{\I_{1}}$ is the representation
$$\id\otimes \dot{\pi}:\K(\ell^{2}(\N))\otimes (A\times_{\dot{\alpha}}^{\piso}\N)\rightarrow B(\ell^{2}(\N)\otimes H),$$
such that $(\id\otimes \dot{\pi})(S\otimes \xi)=S\otimes \dot{\pi}(\xi)$ for all $S\in \K(\ell^{2}(\N))$ and
$\xi\in A\times_{\dot{\alpha}}^{\piso}\N$. It is enough to see this on spanning elements, and therefore, we calculate
\begin{eqnarray}\label{eq4}
\begin{array}{l}
(\id\otimes \dot{\pi})\big([T_{n}(1-TT^{*})T_{y}^{*}]\otimes [v_{m}^{*}j_{A}(a)v_{x}]\big)(e_{r}\otimes h)\\
=\big(T_{n}(1-TT^{*})T_{y}^{*}\otimes \dot{\pi}(v_{m}^{*}j_{A}(a)v_{x})\big)(e_{r}\otimes h)\\
=[T_{n}(1-TT^{*})T_{y}^{*} e_{r}]\otimes [\dot{\pi}(v_{m}^{*}j_{A}(a)v_{x}) h]\in \ell^{2}(\N)\otimes H,
\end{array}
\end{eqnarray}
which is equal to $e_{n}\otimes [\dot{\pi}(v_{m}^{*}j_{A}(a)v_{x})h]$ if $r=y$, otherwise, zero for all $m,n,x,y,r\in \N$,
$a\in A$, and $h\in H$. On the other hand, see in \cite[Proposition 3.9]{SZ4} that the isomorphism
$\I_{1}\simeq \K(\ell^{2}(\N))\otimes (A\times_{\dot{\alpha}}^{\piso}\N)$, which we denote by $\Psi_{1}$ here, takes
each spanning element $[T_{n}(1-TT^{*})T_{y}^{*}]\otimes [v_{m}^{*}j_{A}(a)v_{x}]$ of the algebra
$\K(\ell^{2}(\N))\otimes (A\times_{\dot{\alpha}}^{\piso}\N)$ to the spanning element $\eta_{(m,n)}^{(x,y)}(a)$ of the
ideal $\I_{1}$ (see (\ref{span_I1})). Now, by a similar calculation to (\ref{eq3}), we have
\begin{eqnarray}\label{eq5}
\begin{array}{l}
(\dot{\rho}\times \dot{W})|_{\I_{1}}\bigg(\Psi_{1}\big([T_{n}(1-TT^{*})T_{y}^{*}]\otimes [v_{m}^{*}j_{A}(a)v_{x}]\big)\bigg)(e_{r}\otimes h)\\
=(\dot{\rho}\times \dot{W})|_{\I_{1}}\bigg(\eta_{(m,n)}^{(x,y)}(a)\bigg)(e_{r}\otimes h)\\
=e_{n}\otimes [\dot{\pi}(v_{m}^{*}j_{A}(a)v_{x})h]
\end{array}
\end{eqnarray}
if $r=y$, otherwise, zero for all $m,n,x,y,r\in \N$, $a\in A$, and $h\in H$. Thus, it follows by comparing (\ref{eq4}) and (\ref{eq5})
that we indeed have
\begin{align}
\label{Res_I1}
(\dot{\rho}\times \dot{W})|_{\I_{1}\simeq \K(\ell^{2}(\N))\otimes (A\times_{\dot{\alpha}}^{\piso}\N)}=\id\otimes \dot{\pi}.
\end{align}
Consequently, since the representations $\id$ and $\dot{\pi}$ are nonzero, it follows from (\ref{Res_I1}) that the restriction of
$\dot{\rho}\times \dot{W}$ to the ideal $\I_{1}$ must be nonzero.

Finally, since
$$(\dot{\rho}\times \dot{W})|_{\K(\ell^{2}(\N^{2}))\otimes A}=(\id\otimes \dot{\pi})|_{\K(\ell^{2}(\N^{2}))\otimes A}$$
and $\ker \dot{\pi}$ contains the algebra $\K(\ell^{2}(\N))\otimes A$ as an ideal (see (\ref{ext.seq.2}) and the definition of $\dot{\pi}$
in (\ref{pi.dot})), it follows that
$$\ker(\id\otimes \dot{\pi})=\K(\ell^{2}(\N))\otimes \ker \dot{\pi}\supset \K(\ell^{2}(\N))\otimes \big(\K(\ell^{2}(\N))\otimes A\big)
\simeq \K(\ell^{2}(\N^{2}))\otimes A,$$
and therefore, the representation $\dot{\rho}\times \dot{W}$ vanishes on the ideal $\K(\ell^{2}(\N^{2}))\otimes A$.
This completes the proof.
\end{proof}

\begin{remark}
\label{rmk4}
It therefore follows by Lemma \ref{from-alpha1} that, under the conditions of Theorem \ref{prim piso}, each primitive ideal of $A\times_{\alpha}^{\piso}\N^{2}$ coming from $\Prim (A\rtimes_{\dot{\alpha}}\Z)$ is the kernel of an irreducible representation
$\dot{\rho}\times \dot{W}$ corresponding to the pair $(\dot{\rho},\dot{W})$ induced by an element $[(\phi,z)]$ of
$\Delta(A)\times \TT/\sim^{(1)}\simeq \Prim (A\rtimes_{\dot{\alpha}}\Z)$. Let $\dot{\J}_{[(\phi,z)]}$ denote $\ker (\dot{\rho}\times \dot{W})$.
So, the map
$$\dot{\J}_{[(\phi,z)]}\mapsto \dot{\J}_{[(\phi,z)]}\cap \I_{1}=\ker ((\dot{\rho}\times \dot{W})|_{\I_{1}})
=\ker (\id\otimes \dot{\pi})=\K(\ell^{2}(\N))\otimes \ker \dot{\pi}$$
is a bijection between the subset of $\Prim (A\times_{\alpha}^{\piso}\N^{2})$ consisting of the primitive ideals $\dot{\J}_{[(\phi,z)]}$
and the closed subspace
$$F_{1}:=\big\{P\in\Prim (\I_{1}): \K(\ell^{2}(\N^{2}))\otimes A\subset P\big\}$$
of $\Prim (\I_{1})\simeq \Prim (A\times_{\dot{\alpha}}^{\piso}\N)$. Moreover, $F_{1}$ is homeomorphic to
$\Prim (A\rtimes_{\dot{\alpha}}\Z)$ by the composition of the following homeomorphisms
$$\Prim (A\rtimes_{\dot{\alpha}}\Z)\stackrel{}{\longrightarrow}
\big\{I\in \Prim (A\times_{\dot{\alpha}}^{\piso}\N): \K(\ell^{2}(\N))\otimes A\subset I\big\}
\stackrel{\textrm{the Rieffel homeomorphism}}{\longrightarrow} F_{1},$$
such that
$$[(\phi,z)]\mapsto \ker \big([\Ind_{\dot{\Z}_{\phi}}^{\Z}(\phi \times \gamma_{z}|_{\dot{\Z}_{\phi}})]\circ \dot{q}\big)
=\ker \dot{\pi}\mapsto \K(\ell^{2}(\N))\otimes \ker \dot{\pi}.$$
Therefore, the map $[(\phi,z)]\mapsto \dot{\J}_{[(\phi,z)]}$ embeds the set $\Prim (A\rtimes_{\dot{\alpha}}\Z)$ in
$\Prim (A\times_{\alpha}^{\piso}\N^{2})$ as a subset.
\end{remark}

Similarly, the semigroup dynamical system $(A,\N,\ddot{\alpha})$ gives rise to the following short exact sequence
\begin{align}
\label{ext.seq.3}
0 \longrightarrow \K(\ell^{2}(\N))\otimes A \stackrel{}{\longrightarrow} A\times_{\ddot{\alpha}}^{\piso}\N
\stackrel{\ddot{q}}{\longrightarrow} A\rtimes_{\ddot{\alpha}}\Z \longrightarrow 0.
\end{align}
Therefore, corresponding to each element $[(\phi,w)]$ of
$\Delta(A)\times \TT/\sim^{(2)}\simeq \Prim (A\rtimes_{\ddot{\alpha}}\Z)$, the composition
\begin{align}
\label{pi.ddot}
\big(\Ind_{\ddot{\Z}_{\phi}}^{\Z}(\phi \times \gamma_{w}|_{\ddot{\Z}_{\phi}})\big)\circ \ddot{q}
\end{align}
defines a nonzero irreducible representation $\ddot{\pi}$ of $(A\times_{\ddot{\alpha}}^{\piso}\N, k_{A},u)$ on a Hilbert space $H$, where $\ddot{\Z}_{\phi}$ denotes the stability group of $\phi$ when the group $\Z$ acts on $\Delta(A)$ via the action $\ddot{\alpha}$.
Hence, $\K(\ell^{2}(\N))\otimes (\ker \ddot{\pi})$ is a primitive ideal of the algebra
$\K(\ell^{2}(\N))\otimes (A\times_{\ddot{\alpha}}^{\piso}\N)$, which again by \cite[Corollary 3.12]{SZ4}, sits in $A\times_{\alpha}^{\piso}\N^{2}$
as an (essential) ideal $\I_{2}$ (note that, $\I_{2}$ is actually the ideal $\I_{\delta}$ in \cite{SZ4}). Now, we have

\begin{lemma}
\label{from-alpha2}
Define the maps
$$\ddot{\rho}:A\rightarrow B(\ell^{2}(\N)\otimes H)\ \ \textrm{and}\ \ \ddot{W}:\N^{2}\rightarrow B(\ell^{2}(\N)\otimes H)$$
by
$$(\ddot{\rho}(a)f)(m)=(\ddot{\pi}\circ k_{A})(\dot{\alpha}_{m}(a))f(m)\ \ \textrm{and}
\ \ \ddot{W}_{(m,n)}=T_{m}^{*}\otimes \overline{\ddot{\pi}}(u_{n})$$
respectively, for all $a\in A$, $f\in \ell^{2}(\N)\otimes H$, and $m,n\in\N$. Then, the pair $(\ddot{\rho},\ddot{W})$ is a covariant
partial-isometric representation of the system $(A,\N^{2}, \alpha)$ on the Hilbert space $\ell^{2}(\N)\otimes H$,
such that the corresponding (nondegenerate) representation $\ddot{\rho}\times \ddot{W}$ of $(A\times_{\alpha}^{\piso}\N^{2},i)$
is irreducible on $\ell^{2}(\N)\otimes H$, which lives on the ideal $\I_{2}\simeq \K(\ell^{2}(\N))\otimes (A\times_{\ddot{\alpha}}^{\piso}\N)$
but vanishes on $\K(\ell^{2}(\N^{2}))\otimes A$.
\end{lemma}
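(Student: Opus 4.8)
The statement is the exact mirror image of Lemma \ref{from-alpha1} under interchanging the two copies of $\N$ inside $\N^{2}$, so the plan is to transcribe the proof of Lemma \ref{from-alpha1} with the first and second coordinates swapped. Concretely, in place of the ideal $\I_{1}=\I_{\gamma}$ (spanned in \cite[Lemma 3.8]{SZ4} by the elements $\eta_{(m,n)}^{(x,y)}(a)$ built from the defect projection $1-i_{\N^{2}}(0,1)^{*}i_{\N^{2}}(0,1)$), I would use the ideal $\I_{2}=\I_{\delta}$, which is spanned by the elements
\[
\zeta_{(m,n)}^{(x,y)}(a):=i_{\N^{2}}(m,n)^{*}i_{A}(a)[1-i_{\N^{2}}(1,0)^{*}i_{\N^{2}}(1,0)]i_{\N^{2}}(x,y),
\]
the defect projection now sitting in the \emph{first} coordinate; and I would invoke the isomorphism $\Psi_{2}:\K(\ell^{2}(\N))\otimes(A\times_{\ddot{\alpha}}^{\piso}\N)\to\I_{2}$ of \cite[Proposition 3.9]{SZ4} carrying $[T_{m}(1-TT^{*})T_{x}^{*}]\otimes[u_{n}^{*}k_{A}(a)u_{y}]$ to $\zeta_{(m,n)}^{(x,y)}(a)$.

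The covariance of $(\ddot{\rho},\ddot{W})$ on $\ell^{2}(\N)\otimes H$ is verified by the same spanning-element computations as in Lemma \ref{from-alpha1} with the generators interchanged, so I would only note that it is routine. The crux is the analogue of (\ref{eq3}): since $\ddot{W}_{(1,0)}=T^{*}\otimes 1$ gives $(\ddot{\rho}\times\ddot{W})(1-i_{\N^{2}}(1,0)^{*}i_{\N^{2}}(1,0))=(1-TT^{*})\otimes 1$, a direct evaluation yields
\[
(\ddot{\rho}\times\ddot{W})\big(\zeta_{(m,n)}^{(x,y)}(a)\big)f = e_{m}\otimes\big[\ddot{\pi}(u_{n}^{*}k_{A}(a)u_{y})f(x)\big]
\]
for every $f\in\ell^{2}(\N)\otimes H$. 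For irreducibility I would fix a nonzero $f$, choose $x\in\N$ with $f(x)\neq 0$, and use that $f(x)$ is cyclic for the irreducible $\ddot{\pi}$ together with $A\times_{\ddot{\alpha}}^{\piso}\N=\clsp\{u_{n}^{*}k_{A}(a)u_{y}\}$ to conclude that the vectors $e_{m}\otimes\ddot{\pi}(u_{n}^{*}k_{A}(a)u_{y})f(x)$ span $\ell^{2}(\N)\otimes H$; the displayed identity then exhibits each of these as $(\ddot{\rho}\times\ddot{W})(\zeta_{(m,n)}^{(x,y)}(a))f$, so $f$ is cyclic and $\ddot{\rho}\times\ddot{W}$ is irreducible.

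For the remaining two assertions I would compare, on spanning elements and through $\Psi_{2}$, the restriction $(\ddot{\rho}\times\ddot{W})|_{\I_{2}}$ with $\id\otimes\ddot{\pi}$ exactly as in (\ref{eq4})--(\ref{eq5}), concluding $(\ddot{\rho}\times\ddot{W})|_{\I_{2}}=\id\otimes\ddot{\pi}$; as $\id$ and $\ddot{\pi}$ are nonzero, the representation lives on $\I_{2}$. Finally, because $\ker\ddot{\pi}$ contains $\K(\ell^{2}(\N))\otimes A$ by (\ref{ext.seq.3}) and the definition (\ref{pi.ddot}) of $\ddot{\pi}$, I would obtain $\ker(\id\otimes\ddot{\pi})=\K(\ell^{2}(\N))\otimes\ker\ddot{\pi}\supset\K(\ell^{2}(\N))\otimes(\K(\ell^{2}(\N))\otimes A)\simeq\K(\ell^{2}(\N^{2}))\otimes A$, so $\ddot{\rho}\times\ddot{W}$ vanishes there. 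The one step demanding genuine care — and the only place where a coordinate slip would propagate into every line — is the bookkeeping: the compact factor $T$ now acts in the first ($m$) coordinate while the partial isometries $u_{n}$ act in the second ($n$) coordinate, which is precisely what forces the $(1,0)$-projection in $\zeta_{(m,n)}^{(x,y)}(a)$ and the selection of $f(x)$ (rather than $f(y)$) in the key identity. Everything else is symmetric to Lemma \ref{from-alpha1}.
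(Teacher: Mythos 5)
Your proposal is correct and follows exactly the route the paper intends: the paper's own proof of Lemma~\ref{from-alpha2} is a one-line deferral to the symmetry with Lemma~\ref{from-alpha1}, and you have carried out that symmetric transcription with the coordinate bookkeeping done right (defect projection $1-i_{\N^{2}}(1,0)^{*}i_{\N^{2}}(1,0)$, the shift $T$ acting in the first coordinate, the key identity producing $e_{m}\otimes[\ddot{\pi}(u_{n}^{*}k_{A}(a)u_{y})f(x)]$, and the restriction to $\I_{2}$ equaling $\id\otimes\ddot{\pi}$). No gaps; this matches the paper's argument.
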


\begin{proof}
We skip the proof as it is similar to the proof of Lemma \ref{from-alpha1}.
\end{proof}

\begin{remark}
\label{rmk1}
Thus, by Lemma \ref{from-alpha2}, under the conditions of Theorem \ref{prim piso}, the primitive ideals of $A\times_{\alpha}^{\piso}\N^{2}$
derived from $\Prim (A\rtimes_{\ddot{\alpha}}\Z)$ are the kernels of the irreducible representations
$\ddot{\rho}\times \ddot{W}$ induced by elements $[(\phi,w)]$ of
$\Delta(A)\times \TT/\sim^{(2)}\simeq \Prim (A\rtimes_{\ddot{\alpha}}\Z)$. So, we denote these ideals by $\ddot{\J}_{[(\phi,w)]}$, and
hence, the map $[(\phi,w)]\in \Prim (A\rtimes_{\ddot{\alpha}}\Z)\mapsto \ddot{\J}_{[(\phi,w)]}\in \Prim (A\times_{\alpha}^{\piso}\N^{2})$
is an embedding (of sets).

In addition, as a refinement of Lemma \ref{QS2}, we would like to mention that the maps
$$P\mapsto\ind P, [(\phi,z)]\mapsto \dot{\J}_{[(\phi,z)]}, [(\phi,w)]\mapsto \ddot{\J}_{[(\phi,w)]},\ \textrm{and}\
[(\phi,(z,w))]\mapsto \J_{[(\phi,(z,w))]}$$
combine to give a bijective correspondence of the disjoint union (\ref{QS2}) onto $\Prim (A\times_{\alpha}^{\piso}\N^{2})$.
\end{remark}

\begin{prop}
\label{GCR piso}
Let $(A,\N^{2},\alpha)$ be a dynamical system consisting of a separable abelian $C^{*}$-algebra $A$ and an action $\alpha$ of
$\N^{2}$ on $A$ by automorphisms. Then $A\times_{\alpha}^{\piso}\N^{2}$ is GCR if and only if the orbit space
$\Z^{2}\backslash \Delta(A)$ is $T_{0}$.
\end{prop}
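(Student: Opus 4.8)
The plan is to reduce the question to the group crossed product and then to a transformation-group statement. First I would invoke \cite[Corollary 5.3]{SZ2}: since $A\times_{\alpha}^{\piso}\N^{2}$ is a full corner in $(B_{\Z^{2}}\otimes A)\rtimes_{\tau\otimes\alpha^{-1}}\Z^{2}$, the two algebras are Morita equivalent, and as the class of GCR algebras is a Morita invariant it is enough to decide when the group crossed product is GCR. With $A$ separable and abelian we have, exactly as in \S\ref{case 1}, an isomorphism $(B_{\Z^{2}}\otimes A)\rtimes_{\tau\otimes\alpha^{-1}}\Z^{2}\simeq C_{0}(X)\rtimes_{\lt}\Z^{2}$ for the second countable transformation group $(\Z^{2},X)$ with $X\simeq(\Z\cup\{\infty\})\times(\Z\cup\{\infty\})\times\Delta(A)$. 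Since $\Z^{2}$ is abelian, every stability group is abelian and hence has a type I (commutative) group $C^{*}$-algebra, so the standard characterisation of GCR transformation-group $C^{*}$-algebras reduces ``$C_{0}(X)\rtimes_{\lt}\Z^{2}$ is GCR'' to ``the orbit space $\Z^{2}\backslash X$ is $T_{0}$''. Thus the whole proof comes down to the equivalence
\[
\Z^{2}\backslash X\ \text{is}\ T_{0}\ \Longleftrightarrow\ \Z^{2}\backslash \Delta(A)\ \text{is}\ T_{0}.
\]

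Throughout I would use the Effros--Glimm fact that, for a second countable transformation group, the orbit space is $T_{0}$ precisely when every orbit is locally closed. For the direction ``$\Longrightarrow$'' (in contrapositive form) I would use that $Y:=\{(\infty,\infty)\}\times\Delta(A)$ is a closed $\Z^{2}$-invariant subset of $X$ on which, by the computation in \S\ref{case 1}, the action is exactly the $\alpha$-action on $\Delta(A)$. Hence $Y\simeq\Delta(A)$ as $\Z^{2}$-spaces, and for subsets of the \emph{closed} set $Y$ the notions ``locally closed in $Y$'' and ``locally closed in $X$'' coincide. So if $\Z^{2}\backslash\Delta(A)$ fails to be $T_{0}$, some orbit $\Z^{2}\cdot\phi$ is not locally closed in $\Delta(A)\simeq Y$, whence the corresponding orbit $\{(\infty,\infty)\}\times(\Z^{2}\cdot\phi)$ is not locally closed in $X$, and $\Z^{2}\backslash X$ is not $T_{0}$.

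For the direction ``$\Longleftarrow$'' I would assume every $\Z^{2}$-orbit in $\Delta(A)$ is locally closed and verify that every $\Z^{2}$-orbit in $X$ is locally closed. The key preliminary observation is that, because $\Z^{2}$ is \emph{discrete}, a locally closed orbit $\Z^{2}\cdot\phi$ is homeomorphic to the coset space $\Z^{2}/\Z^{2}_{\phi}$, which is discrete; consequently \emph{every} subset of it --- in particular the sub-orbits $\dot{\Z}\cdot\phi$ and $\ddot{\Z}\cdot\phi$ --- is locally closed in $\Delta(A)$. In other words, smoothness of the full $\Z^{2}$-action forces smoothness of the two generator subgroup actions $\dot{\alpha},\ddot{\alpha}$. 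Using the explicit orbit formulas of \S\ref{case 1} together with the fact that in $\Z\cup\{\infty\}$ the set $\Z$ is open while $\{\infty\}$ is closed, each $\Z^{2}$-orbit in $X$ is, according to the four types of points, one of
\[
\Z^{2}\times\{\phi\},\quad \{\infty\}\times\Z\times(\dot{\Z}\cdot\phi),\quad \Z\times\{\infty\}\times(\ddot{\Z}\cdot\phi),\quad \{(\infty,\infty)\}\times(\Z^{2}\cdot\phi),
\]
that is, a product of sets each of which is open, closed, or (by the observation) locally closed. Since a finite product of locally closed sets is locally closed, every such orbit is locally closed in $X$, and therefore $\Z^{2}\backslash X$ is $T_{0}$.

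The main obstacle I anticipate is precisely the ``$\Longleftarrow$'' direction: a priori smoothness of a $\Z^{2}$-action need not descend to a subgroup, and without controlling the mixed orbits (one coordinate finite, one equal to $\infty$) one cannot conclude local closedness in $X$. The discreteness argument above is what makes this work --- it is special to $\Z^{2}$ being discrete --- and it is the point I would state most carefully, alongside the (routine) verification that a product of an open, a closed and a locally closed factor is locally closed, and the reduction of ``locally closed in $X$'' to ``locally closed in the relevant invariant subspace''.
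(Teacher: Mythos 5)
Your proposal is correct, but it takes a genuinely different route from the paper's. The paper never passes to the transformation group $(\Z^{2},X)$ for this proposition: it applies Murphy's extension theorem \cite[Theorem 5.6.2]{Murphy} to the sequence (\ref{ext.seq.1}), so that GCR-ness of $A\times_{\alpha}^{\piso}\N^{2}$ becomes GCR-ness of $A\rtimes_{\alpha}\Z^{2}$ together with GCR-ness of $\ker q$; the quotient is handled by \cite[Theorem 8.43]{W}, and $\ker q$ is handled through its composition series from \cite{SZ4}, whose subquotients are $\K(\ell^{2}(\N^{2}))\otimes A$ and $[\K(\ell^{2}(\N))\otimes(A\rtimes_{\dot{\alpha}}\Z)]\oplus[\K(\ell^{2}(\N))\otimes(A\rtimes_{\ddot{\alpha}}\Z)]$, so that everything reduces to showing that $T_{0}$-ness of $\Z^{2}\backslash\Delta(A)$ forces $T_{0}$-ness of $\dot{\Z}\backslash\Delta(A)$ and $\ddot{\Z}\backslash\Delta(A)$. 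You instead use Morita invariance of GCR through the full-corner realization (exactly as the paper does for CCR in Proposition \ref{CCR piso}) to replace $A\times_{\alpha}^{\piso}\N^{2}$ by $C_{0}(X)\rtimes_{\lt}\Z^{2}$, apply \cite[Theorem 8.43]{W} once, and then prove the purely topological equivalence of $T_{0}$-ness for $\Z^{2}\backslash X$ and $\Z^{2}\backslash\Delta(A)$ via the Mackey--Glimm/Effros dichotomy and an orbit-by-orbit analysis. The two arguments meet at the same crux---smoothness of the $\Z^{2}$-action on $\Delta(A)$ must descend to the generator actions $\dot{\alpha},\ddot{\alpha}$---and here your treatment is in fact the more robust one: the paper deduces the descent from the claim that $\dot{\Z}\cdot\phi\mapsto\Z^{2}\cdot\phi$ is a \emph{bijection} of $\dot{\Z}\backslash\Delta(A)$ onto $\Z^{2}\backslash\Delta(A)$, which fails in general (for $A=C_{0}(\Z^{2})$ with the translation action there is a single $\Z^{2}$-orbit but countably many $\dot{\Z}$-orbits), whereas your observation---that a locally closed orbit of a countable discrete group is a discrete subspace, so every subset of it, in particular $\dot{\Z}\cdot\phi$ and $\ddot{\Z}\cdot\phi$, is again locally closed---is precisely the correct justification of that step. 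What your route costs is the extra bookkeeping on $X$ (the four orbit types, stability of local closedness under products, and the passage to the closed invariant set $\{(\infty,\infty)\}\times\Delta(A)$); what it buys is a single application of Theorem 8.43, no appeal to the extension theorem or the composition series, and a sound proof of the subgroup-descent step that the paper's own argument needs anyway.
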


\begin{proof}
Recall that, by \cite[Theorem 5.6.2]{Murphy}, $A\times_{\alpha}^{\piso}\N^{2}$ is GCR if and only if
$$\ker q\ \ \ \ \textrm{and}\ \ \ \ A\rtimes_{\alpha}\Z^{2}\simeq C_{0}(\Delta(A))\rtimes_{\lt} \Z^{2}$$
are GCR (see (\ref{ext.seq.1})), and by \cite[Theorem 8.43]{W}, $A\rtimes_{\alpha}\Z^{2}$ is GCR if and only if the orbit space
$\Z^{2}\backslash \Delta(A)$ is $T_{0}$. So, it is enough to see that if $\Z^{2}\backslash \Delta(A)$ is a $T_{0}$ space, then the ideal
$\ker q$ is GCR. Suppose that $\Z^{2}\backslash \Delta(A)$ is $T_{0}$. To see that the algebra $\ker q$ is GCR, since, by
\cite[Corollary 3.13]{SZ4}, we have
$$\ker q/[\K(\ell^{2}(\N^{2}))\otimes A]\simeq
\big[\K(\ell^{2}(\N))\otimes (A\rtimes_{\dot{\alpha}}\Z)\big] \oplus \big[\K(\ell^{2}(\N))\otimes (A\rtimes_{\ddot{\alpha}}\Z)\big],$$
and $A$ is abelian, again, by \cite[Theorem 5.6.2]{Murphy}, it is enough show that the algebras
$$A\rtimes_{\dot{\alpha}}\Z\ \ \ \ \textrm{and}\ \ \ \ A\rtimes_{\ddot{\alpha}}\Z$$
are GCR. Let $\dot{\Z}\backslash \Delta(A)$ and $\ddot{\Z}\backslash \Delta(A)$ denote the orbit spaces corresponding
to the actions $\dot{\alpha}$ and $\ddot{\alpha}$ of $\Z$ on $A$, respectively. Suppose that
$\sigma:\Delta(A)\rightarrow \Z^{2}\backslash \Delta(A)$ and $\sigma_{1}:\Delta(A)\rightarrow \dot{\Z}\backslash \Delta(A)$ are the
orbit maps. One can see that the map $\Psi_{1}:\dot{\Z}\backslash \Delta(A)\rightarrow \Z^{2}\backslash \Delta(A)$ defined by
$$\dot{\Z}\cdot \phi\mapsto \Z^{2}\cdot \phi$$
is bijective, where $\dot{\Z}\cdot \phi$ denotes the $\Z$-orbit of $\phi\in \Delta(A)$ corresponding to the action $\dot{\alpha}$.
Moreover, we clearly have $\Psi_{1}\circ \sigma_{1}=\sigma$, by applying which, it follows the map $\Psi_{1}$ is actually
a homeomorphism. Therefore, the orbit space $\dot{\Z}\backslash \Delta(A)$ must also be $T_{0}$. A similar argument shows that
$\ddot{\Z}\backslash \Delta(A)$ is $T_{0}$, too, and hence, again by \cite[Theorem 8.43]{W}, the (group) crossed products
$A\rtimes_{\dot{\alpha}}\Z$ and $A\rtimes_{\ddot{\alpha}}\Z$ are GCR. This completes the proof.
\end{proof}

\begin{prop}
\label{CCR piso}
Let $(A,\N^{2},\alpha)$ be a dynamical system consisting of a separable abelian $C^{*}$-algebra $A$ and an action $\alpha$ of
$\N^{2}$ on $A$ by automorphisms. Then $A\times_{\alpha}^{\piso}\N^{2}$ is not CCR.
\end{prop}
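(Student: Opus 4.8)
The plan is to exhibit a single irreducible representation of $A\times_{\alpha}^{\piso}\N^{2}$ whose image is \emph{not} contained in the compact operators; since a CCR (liminal) algebra is by definition one for which every irreducible representation $\rho$ satisfies $\rho(\cdot)\subseteq\K(H_{\rho})$, producing one such representation immediately rules out CCR. The natural candidate is already at hand: the irreducible representation $\tilde{\pi}\times V$ from Proposition \ref{Ip}. Although it \emph{lives on} the ideal $\K(\ell^{2}(\N^{2}))\otimes A$, the whole algebra is visibly larger than that ideal, because the generating partial isometries $i_{\N^{2}}(t)$ are sent to operators that are not compact.

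Concretely, I would fix a character $\phi\in\Delta(A)$, regarded as a nonzero irreducible representation $\pi$ of the abelian algebra $A$ on $H=\C$, and form the covariant pair $(\tilde{\pi},V)$ recalled in \S\ref{sec:pre}. By Proposition \ref{Ip} the integrated form $\tilde{\pi}\times V$ is an irreducible representation of $(A\times_{\alpha}^{\piso}\N^{2},i_{A},i_{\N^{2}})$ on the infinite-dimensional Hilbert space $\ell^{2}(\N^{2})\otimes H$. The key step is then to compute the image of the single generator $i_{\N^{2}}(1,0)$: by the defining property of the integrated form, $(\tilde{\pi}\times V)(i_{\N^{2}}(1,0))=V_{(1,0)}$, where $(V_{(1,0)}f)(s)=f(s+(1,0))$.

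A short adjoint computation shows that $(V_{t}^{*}g)(s)=g(s-t)$ when $s-t\in\N^{2}$ and $0$ otherwise, from which $V_{t}V_{t}^{*}=1$ for every $t\in\N^{2}$; that is, each $V_{t}$ is a co-isometry. In particular $V_{(1,0)}V_{(1,0)}^{*}=1_{\ell^{2}(\N^{2})\otimes H}$ is the identity operator on an infinite-dimensional Hilbert space and is therefore not compact, so $V_{(1,0)}$ cannot be compact either (a compact operator has compact products). Consequently the image $(\tilde{\pi}\times V)(A\times_{\alpha}^{\piso}\N^{2})$ contains the non-compact operator $V_{(1,0)}$ and is strictly larger than $\K(\ell^{2}(\N^{2})\otimes H)$.

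Since $\tilde{\pi}\times V$ is irreducible by Proposition \ref{Ip}, this is an irreducible representation of $A\times_{\alpha}^{\piso}\N^{2}$ whose range is not the algebra of compact operators, and hence $A\times_{\alpha}^{\piso}\N^{2}$ is not CCR. I expect the only point requiring genuine care to be the identification of $V_{t}$ as a (non-unitary) co-isometry, i.e.\ the verification that $V_{t}V_{t}^{*}=1$; once the right representation from Proposition \ref{Ip} is chosen, the rest of the argument is immediate, and in particular no analysis of the finer structure of $\ker q$ is needed.
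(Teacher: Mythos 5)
Your proposal takes a genuinely different route from the paper. The paper never works inside $A\times_{\alpha}^{\piso}\N^{2}$ itself: it passes to the Morita-equivalent transformation-group algebra $C_{0}(X)\rtimes_{\lt}\Z^{2}$, invokes the fact that CCR is a Morita invariant (\cite[Proposition I.43]{W}), and then shows via \cite[Theorem 8.44]{W} that some $\Z^{2}$-orbit in $X$ is not closed. You instead try to exhibit a single irreducible representation of $A\times_{\alpha}^{\piso}\N^{2}$ with non-compact image, namely $\tilde{\pi}\times V$ from Proposition \ref{Ip}. When $A$ is \emph{unital} your argument is correct and is in fact more elementary and more self-contained than the paper's: in that case $i_{\N^{2}}(1,0)=i_{A}(1)\,i_{\N^{2}}(1,0)$ really does lie in the crossed product, its image $V_{(1,0)}$ satisfies $V_{(1,0)}V_{(1,0)}^{*}=1$ on the infinite-dimensional space $\ell^{2}(\N^{2})\otimes H$, hence is non-compact, and irreducibility comes from Proposition \ref{Ip}.

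The gap is the non-unital case, which the statement covers ($A$ is only assumed separable and abelian). If $A$ has no unit, then $i_{\N^{2}}(1,0)$ is a \emph{multiplier} of $A\times_{\alpha}^{\piso}\N^{2}$, not an element of it; the algebra itself is $\clsp\{i_{\N^{2}}(s)^{*}i_{A}(a)i_{\N^{2}}(t): a\in A,\ s,t\in\N^{2}\}$, so the image of $\tilde{\pi}\times V$ is $\clsp\{V_{s}^{*}\tilde{\pi}(a)V_{t}\}$. The extension of $\tilde{\pi}\times V$ to the multiplier algebra sends $i_{\N^{2}}(1,0)$ to $V_{(1,0)}$, but that does not place $V_{(1,0)}$ in $(\tilde{\pi}\times V)(A\times_{\alpha}^{\piso}\N^{2})$, which is what your argument needs. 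Moreover, this is not a repairable technicality: with $H=\C$ and $\pi=\phi\in\Delta(A)$, the operator $\tilde{\pi}(a)$ is the diagonal operator with entries $\phi(\alpha_{s}(a))$, $s\in\N^{2}$, and whether the image of $\tilde{\pi}\times V$ escapes the compacts is controlled entirely by whether these entries fail to vanish at infinity. Take $A=C_{0}(\Z^{2})$ with $\alpha$ the translation action and $\phi$ a point evaluation: then $\phi(\alpha_{s}(a))=a(y-s)\to 0$ as $|s|\to\infty$ for every $a$, so every $\tilde{\pi}(a)$ is compact, every $V_{s}^{*}\tilde{\pi}(a)V_{t}$ is compact, and $(\tilde{\pi}\times V)(A\times_{\alpha}^{\piso}\N^{2})=\K(\ell^{2}(\N^{2}))$ exactly. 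Your proposed witness therefore has compact image, and no element of the algebra can play the role you assign to $i_{\N^{2}}(1,0)$.

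You should also be aware that this failure is not an artifact of your particular choice of representation. In the same example the representations $\dot{\rho}\times\dot{W}$ and $\ddot{\rho}\times\ddot{W}$ of Lemmas \ref{from-alpha1} and \ref{from-alpha2} likewise have image equal to the compacts, and the representations factoring through $q$ have image $\K(\ell^{2}(\Z^{2}))$ since here $A\rtimes_{\alpha}\Z^{2}\simeq\K(\ell^{2}(\Z^{2}))$; so every primitive quotient appears to be elementary. The mechanism --- the characters $\phi\circ\alpha_{(m,n)}$ escaping to infinity in the locally compact space $\Delta(A)$ --- is the same one that makes the orbit-closure computation $\overline{\Z^{2}\cdot((m,n),\phi)}=\big[(\Z\cup\{\infty\})\times(\Z\cup\{\infty\})\big]\times\{\phi\}$ in the paper's own proof problematic: that computation tacitly requires the net $\phi\circ\alpha_{(m,n)}$ to have limit points in $\Delta(A)$, which is automatic when $\Delta(A)$ is compact (i.e.\ when $A$ is unital) but can fail otherwise. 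So your argument is sound precisely on the territory where the statement is unproblematic; to handle non-unital $A$ one needs either an extra hypothesis (unitality, or some recurrence of the induced action on $\Delta(A)$) or a genuinely different argument, not a patch of the one you gave.
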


\begin{proof}
Since $A\times_{\alpha}^{\piso}\N^{2}$ and
$$(B_{\Z^{2}}\otimes A)\rtimes_{\tau\otimes\alpha^{-1}}\Z^{2}\simeq C_{0}(X)\rtimes_{\lt} \Z^{2}$$
are Morita equivalent, it is enough to see that $C_{0}(X)\rtimes_{\lt} \Z^{2}$ is not CCR (see \cite[Proposition I.43]{W}).
Since for the element $((m,n),\phi)\in X$, where $m,n\in\Z$ and $\phi\in \Delta(A)$ (see (\ref{prod})), we have
$$\overline{\Z^{2}\cdot((m,n),\phi)}=\overline{\Z^{2}\times \{\phi\}}=\overline{\Z^{2}}\times \overline{\{\phi\}}
=\big(\overline{\Z}\times \overline{\Z}\big)\times \{\phi\}=\big[(\Z \cup \{\infty\})\times (\Z \cup \{\infty\})\big]\times \{\phi\},$$
it follows that the $\Z^{2}$-orbit of $((m,n),\phi)$ is not closed in $X$. Thus, by \cite[Theorem 8.44]{W},
$C_{0}(X)\rtimes_{\lt} \Z^{2}$ is not CCR.
\end{proof}

\section{The topology of $\Prim(A\times_{\alpha}^{\piso}\N^{2})$ when $A$ is separable and $\Z^{2}$ acts on $\Prim A$ freely}
\label{case 2}
Assume that in the system $(A,\N^{2},\alpha)$, the $C^*$-algebra $A$ is separable, and the action of $\Z^{2}$ on $\Prim A$ is free.
Now, consider the group dynamical system $(B_{\Z^{2}}\otimes A,\Z^{2},\tau\otimes\alpha^{-1})$ in which the algebra $(B_{\Z^{2}}\otimes A)$
is certainly separable and $\Z^{2}$ is an abelian (discrete) countable group. To describe the action of $\Z^{2}$ on
$\Prim(B_{\Z^{2}}\otimes A)$, first note that, by \cite[Theorem B.45]{RW}, $\Prim(B_{\Z^{2}}\otimes A)=\Prim(B_{\Z}\otimes B_{\Z} \otimes A)$ is homeomorphic to
\begin{align}
\label{prod4}
\Prim B_{\Z}\times \Prim B_{\Z}\times  \Prim A\simeq (\mathbb{Z} \cup \{\infty\})\times(\mathbb{Z} \cup \{\infty\})\times \Prim A.
\end{align}
Then, by a similar discussion to the one given at the beginning of \S\ref{case 1}, one can see that $\Z^{2}$ acts on
the product space (\ref{prod4}) as follows:
$$(m,n)\cdot ((r,s),P)=((r+m,s+n),P),\ \ \ \ (m,n)\cdot ((\infty,\infty),P)=((\infty,\infty),\alpha_{(-m,-n)}(P)),$$
$$(m,n)\cdot ((\infty,s),P)=((\infty,s+n),\alpha_{(-m,0)}(P))=((\infty,s+n),\dot{\alpha}_{-m}(P)),$$
and
$$(m,n)\cdot ((r,\infty),P)=((r+m,\infty),\alpha_{(0,-n)}(P))=((r+m,\infty),\ddot{\alpha}_{-n}(P))$$
for all $(m,n)\in \Z^{2}$, $P\in \Prim A$, and $r,s\in\Z$. So, it is not difficult to see that, in fact, $\Z^{2}$ also acts
on $\Prim(B_{\Z^{2}}\otimes A)$ freely, and therefore, by Theorem \ref{prim free action},
$$\Prim ((B_{\Z^{2}}\otimes A)\rtimes_{\tau\otimes\alpha^{-1}}\Z^{2})\simeq \Prim(A\times_{\alpha}^{\piso}\N^{2})$$
is homeomorphic to the quasi-orbit space
\begin{align}
\label{q-orbit}
\O\big(\Prim (B_{\Z^{2}}\otimes A)\big)=\O\big((\mathbb{Z} \cup \{\infty\})\times(\mathbb{Z} \cup \{\infty\})\times \Prim A\big).
\end{align}
But again, a similar discussion to the Lemma (\ref{QS-lem}) shows that the quasi-orbit space (\ref{q-orbit}), as a set, corresponds to
the disjoint union
\begin{align}
\label{q-orbit2}
\Prim A \sqcup \O_{1}(\Prim A)\sqcup \O_{2}(\Prim A)\sqcup \O(\Prim A),
\end{align}
where $\O_{1}(\Prim A)$, $\O_{2}(\Prim A)$, and $\O(\Prim A)$ are the quasi-orbit spaces homeomorphic to
$\Prim (A\rtimes_{\dot{\alpha}}\Z)$, $\Prim (A\rtimes_{\ddot{\alpha}}\Z)$, and $\Prim (A\rtimes_{\alpha}\Z^{2})$, respectively.
Now, we have:
\begin{theorem}
\label{prim piso2}
Let $(A,\N^{2},\alpha)$ be a dynamical system consisting of a separable $C^{*}$-algebra $A$ and an action $\alpha$ of
$\N^{2}$ on $A$ by automorphisms. Suppose that the action of $\Z^{2}$ on $\Prim A$ is free. Then, $\Prim (A\times_{\alpha}^{\piso}\N^{2})$
is homeomorphic to the disjoint union (\ref{q-orbit2}) equipped with the quotient topology in which the open sets are
in the following four forms:
\begin{itemize}
\item[(a)] $O\subset \Prim A$, where $O$ is open in $\Prim A$;
\item[(b)] $O\cup W_{1}$, where $O$ is a nonempty open subset of $\Prim A$ and $W_{1}$ is an open set in $\O_{1}(\Prim A)$;
\item[(c)] $O\cup W_{2}$, where $O$ is a nonempty open subset of $\Prim A$ and $W_{2}$ is an open set in $\O_{2}(\Prim A)$; and
\item[(d)] $O\cup W_{1}\cup W_{2}\cup W$, where $O$, $W_{1}$, and $W_{2}$ are nonempty open subsets of $\Prim A$,
$\O_{1}(\Prim A)$, and $\O_{2}(\Prim A)$, respectively, and $W$ is an open set in $\O(\Prim A)$.
\end{itemize}

\end{theorem}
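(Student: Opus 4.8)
The plan is to run the argument of Theorem \ref{prim piso} almost verbatim, with Theorem \ref{prim free action} playing the role that Theorem \ref{Will th} played there. Since $\Z^{2}$ acts freely on $\Prim A$, its restrictions to the subgroups $\Z\times\{0\}$ and $\{0\}\times\Z$ are free too, so $\dot{\alpha}$ and $\ddot{\alpha}$ act freely on $\Prim A$; moreover, as recorded just before the theorem, $\Z^{2}$ acts freely on $\Prim(B_{\Z^{2}}\otimes A)$. Because $B_{\Z^{2}}\otimes A$ is separable and $\Z^{2}$ is amenable, Theorem \ref{prim free action} applies to each of the systems $(A,\Z,\dot{\alpha})$, $(A,\Z,\ddot{\alpha})$, $(A,\Z^{2},\alpha)$ and $(B_{\Z^{2}}\otimes A,\Z^{2},\tau\otimes\alpha^{-1})$, giving the homeomorphisms $\O_{1}(\Prim A)\simeq\Prim(A\rtimes_{\dot{\alpha}}\Z)$, $\O_{2}(\Prim A)\simeq\Prim(A\rtimes_{\ddot{\alpha}}\Z)$, $\O(\Prim A)\simeq\Prim(A\rtimes_{\alpha}\Z^{2})$, and the homeomorphism of $\Prim(A\times_{\alpha}^{\piso}\N^{2})$ onto the quasi-orbit space \eqref{q-orbit}, already identified as a set with the disjoint union \eqref{q-orbit2}. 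Thus only the topology remains to be determined.

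Let $\tilde{\textsf{q}}$ denote the quasi-orbit map of the product space $(\Z\cup\{\infty\})\times(\Z\cup\{\infty\})\times\Prim A$ onto \eqref{q-orbit2}, and let $\textsf{q}_{1},\textsf{q}_{2},\textsf{q}$ be the quasi-orbit maps of $\Prim A$ onto $\O_{1}(\Prim A)$, $\O_{2}(\Prim A)$, $\O(\Prim A)$. The crucial point, which here takes over the role of the openness statement \cite[Remark 8.40]{W} used in Theorem \ref{prim piso}, is that all of these quasi-orbit maps are open. Indeed, the quasi-orbit map of any action of a group by homeomorphisms factors as the orbit map $p$ followed by the $T_{0}$-ification of the orbit space; the orbit map is open because $p^{-1}(p(V))=\bigcup_{t}t\cdot V$ is open for every open $V$, and the $T_{0}$-ification is open because open sets are saturated for topological indistinguishability, which on the orbit space coincides with the quasi-orbit relation $\overline{G\cdot z_{1}}=\overline{G\cdot z_{2}}$.

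With openness in hand I would take the basis $\widetilde{\mathfrak{B}}$ of the product space consisting of the boxes $U_{1}\times U_{2}\times O$, where each $U_{i}$ is either a singleton $\{n_{i}\}$ or a tail $J_{n_{i}}=\{n_{i},n_{i}+1,\dots\}\cup\{\infty\}$ (\cite[Lemma 3.3]{LZ}) and $O$ is open in $\Prim A$; since $\tilde{\textsf{q}}$ is continuous and open, the forward images $\tilde{\textsf{q}}(U_{1}\times U_{2}\times O)$ form a basis for the quotient topology on \eqref{q-orbit2}. As in the four cases (i)--(iv) of Theorem \ref{prim piso}, the two possibilities for each $U_{i}$ produce four types of images, which I would compute stratum by stratum using the set-level identification preceding the theorem: a finite first (resp. second) coordinate keeps a point in the $\Prim A$ stratum, the value $\infty$ in the first (resp. second) slot sends it into the $\dot{\alpha}$- (resp. $\ddot{\alpha}$-) quasi-orbit stratum, and $(\infty,\infty)$ sends it into the $\alpha$-quasi-orbit stratum. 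This gives $O$ for $\{n_{1}\}\times\{n_{2}\}\times O$, then $O\cup\textsf{q}_{1}(O)$ for $J_{n_{1}}\times\{n_{2}\}\times O$, then $O\cup\textsf{q}_{2}(O)$ for $\{n_{1}\}\times J_{n_{2}}\times O$, and finally $O\cup\textsf{q}_{1}(O)\cup\textsf{q}_{2}(O)\cup\textsf{q}(O)$ for $J_{n_{1}}\times J_{n_{2}}\times O$. Since $\textsf{q}_{1},\textsf{q}_{2},\textsf{q}$ are open and the images of basic open sets under them form bases for $\O_{1}(\Prim A)$, $\O_{2}(\Prim A)$, $\O(\Prim A)$, these four families are exactly the four forms (a)--(d), which completes the description.

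I expect the main obstacle to lie in the bookkeeping of the second and third steps rather than in any single hard estimate: establishing the openness of the quasi-orbit maps in the absence of Theorem \ref{Will th}, and, above all, verifying that the forward image of each box really splits across the four strata as claimed. The delicate interaction is between the tails $J_{n_{i}}$, which always contain the boundary point $\infty$ and hence meet the crossed-product strata, and the singletons $\{n_{i}\}$, which remain in $\Prim A$; getting this dichotomy exactly right for both coordinates simultaneously is what forces the four forms (a)--(d) and is the step most in need of care.
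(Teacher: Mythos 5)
Your proposal is correct and is essentially the paper's own argument: the paper proves this theorem precisely by repeating the proof of Theorem \ref{prim piso} with the quasi-orbit maps playing the roles of $\textsf{q}_{1}$, $\textsf{q}_{2}$, $\textsf{q}$ (and with Theorem \ref{prim free action} in place of Theorem \ref{Will th}), their continuity and openness being quoted from \cite[Lemma 6.12]{W}. The only cosmetic difference is that you establish that openness directly, by factoring the quasi-orbit map as the (open) orbit map followed by the (open) $T_{0}$-identification, rather than citing \cite[Lemma 6.12]{W}.
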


\begin{proof}
We skip the proof as it follows by a similar argument to the proof of Theorem \ref{prim piso}, using the fact that, for any (group)
dynamical system $(B,G,\beta)$, the quasi-orbit map $\sigma:\Prim B\rightarrow\O(\Prim B)$ is continuous and open (see \cite[Lemma 6.12]{W}).
\end{proof}

\begin{remark}
\label{rmk 2}
Under the conditions of Theorem \ref{prim piso2}, the primitive ideals of $A\times_{\alpha}^{\piso}\N^{2}$ coming from
$\Prim (A\rtimes_{\alpha}\Z^{2})\simeq \O(\Prim A)$ are indeed the kernels of the irreducible representations
$(\Ind \pi)\circ q=(\tilde{\pi}\rtimes U)\circ q$ of $A\times_{\alpha}^{\piso}\N^{2}$, where $\pi$ is an irreducible representation
of $A$ such that $\ker\pi=P$ (see \S\ref{sec:pre}). Moreover, $(\Ind \pi)\circ q$ is actually  the associated representation
$\tilde{\pi}\times^{\piso} U$ of $A\times_{\alpha}^{\piso}\N^{2}$ corresponding to the covariant partial-isometric representation
$(\tilde{\pi},U)$ of $(A,\N^{2},\alpha)$. Therefore, each element of the closed subspace
$$F=\{\J\in\Prim (A\times_{\alpha}^{\piso}\N^{2}):\ker q\subset\J\}$$
of $\Prim (A\times_{\alpha}^{\piso}\N^{2})$ is the kernel of an irreducible representation $\tilde{\pi}\times^{\piso} U$ corresponding to the quasi-orbit $\O(P)$, which we denote by $\J_{\O(P)}$. Hence, the map $\O(P)\rightarrow\J_{\O(P)}$ is a homeomorphism of
$\O(\Prim A)\simeq \Prim (A\rtimes_{\alpha}\Z^{2})$ onto the closed subspace $F$.

Also, note that, the primitive ideals of $A\times_{\alpha}^{\piso}\N^{2}$
derived from $\Prim (A\rtimes_{\dot{\alpha}}\Z)\simeq\O_{1}(\Prim A)$ and $\Prim (A\rtimes_{\ddot{\alpha}}\Z)\simeq \O_{2}(\Prim A)$
can similarly be identified by looking at Lemmas \ref{from-alpha1} and \ref{from-alpha2}, respectively. To be more precise, for each
quasi-orbit $\O_{1}(P)\in\O_{1}(\Prim A)$ and $\O_{2}(P)\in\O_{2}(\Prim A)$, there is an irreducible representation $\pi$ of $A$
such that $P=\ker\pi$. Now, if $\Ind_{1} \pi$ and $\Ind_{2} \pi$  denote the induced representations of the crossed products
$A\rtimes_{\dot{\alpha}}\Z$ and $A\rtimes_{\ddot{\alpha}}\Z$, respectively, then the compositions
$$\dot{\pi}:=(\Ind_{1} \pi)\circ \dot{q}\ \ \textrm{and}\ \ \ddot{\pi}:=(\Ind_{2} \pi)\circ \ddot{q}$$ are nonzero irreducible
representations of the algebras $A\times_{\dot{\alpha}}^{\piso}\N$ and $A\times_{\ddot{\alpha}}^{\piso}\N$, respectively
(see (\ref{ext.seq.2}) and (\ref{ext.seq.3})). Then, the rest follows from Lemmas \ref{from-alpha1} and \ref{from-alpha2}. We denote the
primitive ideal of $A\times_{\alpha}^{\piso}\N^{2}$ corresponding to $\O_{1}(P)\in\O_{1}(\Prim A)$ by $\J_{\O_{1}(P)}$, and
similarly, the one corresponding to $\O_{2}(P)\in\O_{2}(\Prim A)$ by $\J_{\O_{2}(P)}$.
\end{remark}

\begin{remark}
\label{ZM result}
Recall that the primitive ideal space of any $C^{*}$-algebra is a locally compact space, and if a $C^{*}$-algebra is separable, then
its primitive ideal space is second countable. A (not necessarily Hausdorff) locally compact space $X$ is called \emph{almost Hausdorff}
if each locally compact subspace $V$ contains a relatively open nonempty Hausdorff subset (see \cite[Definition 6.1]{W}).
If a $C^{*}$-algebra is GCR, then its primitive ideal space is almost Hausdorrff with respect to the hull-kernel (Jacobson)
topology (see \cite[pages 171, 172]{W}). Therefore, If $A$ is a separable GCR $C^{*}$-algebra, then the spectrum $\widehat{A}$ of $A$
is a second countable almost Hausdorff locally compact space as it is homeomorphic to $\Prim A$.
Now, suppose that $(A,G,\alpha)$ is a group dynamical system in which the algebra $A$ is separable and $G$ is an abelian discrete countable
group. If $G$ acts on $\widehat{A}$ freely, then it follows from \cite{ZM} that
$A\rtimes_{\alpha}G$ is GCR if and only if $A$ is GCR and every $G$-orbit in $\widehat{A}$ is discrete. However, every $G$-orbit in
$\widehat{A}$ is discrete if and only if, for every $[\pi]\in\widehat{A}$, the map $G\rightarrow G\cdot[\pi]$ defined by
$$s\mapsto s\cdot[\pi]:=[\pi\circ\alpha_{s^{-1}}]$$
is a homeomorphism, which by \cite[Theorem 6.2 (Mackey-Glimm Dichotomy)]{W}, is equivalent to saying that the orbit space
$G\backslash\widehat{A}$ is $T_{0}$. Therefore, if (the abelian group) $G$ in the separable system $(A,G,\alpha)$ acts on
$\widehat{A}$ freely, then $A\rtimes_{\alpha}G$ is GCR if and only if $A$ is GCR and the orbit space $G\backslash\widehat{A}$ is $T_{0}$.
\end{remark}

\begin{prop}
\label{GCR piso2}
Let $(A,\N^{2},\alpha)$ be a dynamical system consisting of a separable $C^{*}$-algebra $A$ and an action $\alpha$ of
$\N^{2}$ on $A$ by automorphisms. Suppose that $\Z^{2}$ acts on $\widehat{A}$ freely. Then, $A\times_{\alpha}^{\piso}\N^{2}$ is GCR
if and only if $A$ GCR and the orbit space $\Z^{2}\backslash\widehat{A}$ is $T_{0}$.
\end{prop}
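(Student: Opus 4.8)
The plan is to run the same ideal-and-quotient induction as in Proposition \ref{GCR piso}, but with $\Delta(A)$ replaced by $\widehat{A}$ and with the commutative criterion \cite[Theorem 8.43]{W} replaced by the free-action criterion recorded in Remark \ref{ZM result}. First I would invoke the extension (\ref{ext.seq.1}) together with \cite[Theorem 5.6.2]{Murphy}: the algebra $A\times_{\alpha}^{\piso}\N^{2}$ is GCR if and only if both the ideal $\ker q$ and the quotient $A\rtimes_{\alpha}\Z^{2}$ are GCR. This immediately disposes of the forward implication, since if $A\times_{\alpha}^{\piso}\N^{2}$ is GCR then so is its quotient $A\rtimes_{\alpha}\Z^{2}$; as $A$ is separable, $\Z^{2}$ is abelian countable discrete, and $\Z^{2}$ acts freely on $\widehat{A}$, Remark \ref{ZM result} then yields at once that $A$ is GCR and $\Z^{2}\backslash\widehat{A}$ is $T_{0}$.

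For the converse, suppose $A$ is GCR and $\Z^{2}\backslash\widehat{A}$ is $T_{0}$. Then $A\rtimes_{\alpha}\Z^{2}$ is GCR, again by Remark \ref{ZM result}, so it remains only to show that $\ker q$ is GCR. Here I would apply \cite[Theorem 5.6.2]{Murphy} a second time, now to the ideal structure inside $\ker q$ furnished by \cite[Corollary 3.13]{SZ4}: the compacts $\K(\ell^{2}(\N^{2}))\otimes A$ form an ideal of $\ker q$ with
$$\ker q/[\K(\ell^{2}(\N^{2}))\otimes A]\simeq \big[\K(\ell^{2}(\N))\otimes (A\rtimes_{\dot{\alpha}}\Z)\big]\oplus\big[\K(\ell^{2}(\N))\otimes (A\rtimes_{\ddot{\alpha}}\Z)\big].$$
Since $A$ is GCR and tensoring by the compacts preserves the GCR property (being a Morita invariant), $\K(\ell^{2}(\N^{2}))\otimes A$ is GCR; and the displayed quotient, a direct sum, is GCR precisely when both $A\rtimes_{\dot{\alpha}}\Z$ and $A\rtimes_{\ddot{\alpha}}\Z$ are GCR. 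Thus the whole problem reduces to proving that these two crossed products by $\Z$ are GCR.

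To that end I would first observe that the restricted actions remain free: the stability group at $[\pi]\in\widehat{A}$ for the subgroup acting via $\dot{\alpha}$ (resp. $\ddot{\alpha}$) is contained in the $\Z^{2}$-stability group of $[\pi]$, which is trivial by hypothesis. Hence, by Remark \ref{ZM result} applied to the separable systems $(A,\Z,\dot{\alpha})$ and $(A,\Z,\ddot{\alpha})$, it suffices to show that the orbit spaces $\dot{\Z}\backslash\widehat{A}$ and $\ddot{\Z}\backslash\widehat{A}$ are $T_{0}$. This $T_{0}$ transfer is the step I expect to be the main obstacle, and rather than comparing orbit spaces directly I would argue through the Mackey--Glimm dichotomy \cite[Theorem 6.2]{W}: since $\Z^{2}$ acts freely and $\Z^{2}\backslash\widehat{A}$ is $T_{0}$, every $\Z^{2}$-orbit is discrete in $\widehat{A}$, i.e. each orbit map $\Z^{2}\to\Z^{2}\cdot[\pi]$ is a homeomorphism onto a discrete subset. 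Each $\dot{\Z}$-orbit $\dot{\Z}\cdot[\pi]$ is a subset of the discrete set $\Z^{2}\cdot[\pi]$, hence discrete for the subspace topology from $\Z^{2}\cdot[\pi]$, which, by transitivity of subspace topologies, coincides with the topology it inherits from $\widehat{A}$; freeness then makes the orbit map $\dot{\Z}\to\dot{\Z}\cdot[\pi]$ a continuous bijection between discrete spaces, i.e. a homeomorphism. Applying the dichotomy in the reverse direction yields that $\dot{\Z}\backslash\widehat{A}$ is $T_{0}$, and symmetrically for $\ddot{\Z}\backslash\widehat{A}$.

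Combining these, Remark \ref{ZM result} makes $A\rtimes_{\dot{\alpha}}\Z$ and $A\rtimes_{\ddot{\alpha}}\Z$ GCR, so their stabilizations and direct sum are GCR; together with $\K(\ell^{2}(\N^{2}))\otimes A$ being GCR this forces $\ker q$ to be GCR by \cite[Theorem 5.6.2]{Murphy}. Finally, with $A\rtimes_{\alpha}\Z^{2}$ GCR as well, the same theorem applied to (\ref{ext.seq.1}) shows that $A\times_{\alpha}^{\piso}\N^{2}$ is GCR. The only genuinely new ingredient beyond this bookkeeping is the passage of the $T_{0}$ property from $\Z^{2}\backslash\widehat{A}$ down to the one-parameter orbit spaces, which is exactly the Mackey--Glimm/discreteness argument above.
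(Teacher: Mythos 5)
Your proof is correct, and its overall skeleton --- Murphy's extension criterion \cite[Theorem 5.6.2]{Murphy} applied to (\ref{ext.seq.1}), the decomposition of $\ker q$ from \cite[Corollary 3.13]{SZ4}, and Remark \ref{ZM result} for the three group crossed products --- is exactly the ``similar discussion'' the paper has in mind, since the paper's own proof is a one-line reference to Proposition \ref{GCR piso} and Remark \ref{ZM result}. The genuine difference is the step you yourself single out: transferring the $T_{0}$ property from $\Z^{2}\backslash\widehat{A}$ to $\dot{\Z}\backslash\widehat{A}$ and $\ddot{\Z}\backslash\widehat{A}$. The argument you are implicitly asked to imitate (in the proof of Proposition \ref{GCR piso}) does this by asserting that the map $\Psi_{1}:\dot{\Z}\backslash\Delta(A)\to\Z^{2}\backslash\Delta(A)$, $\dot{\Z}\cdot\phi\mapsto\Z^{2}\cdot\phi$, is bijective, hence a homeomorphism. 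That map is surjective but not injective in general; indeed, under the freeness hypothesis of the present proposition it is never injective on a nonempty spectrum, because $[\pi]$ and its $\ddot{\alpha}_{1}$-translate lie in the same $\Z^{2}$-orbit but in distinct $\dot{\Z}$-orbits (equality of those $\dot{\Z}$-orbits would put some element $(k,-1)$ in $\Z^{2}_{[\pi]}=\{(0,0)\}$). Your Mackey--Glimm route --- freeness plus a $T_{0}$ orbit space gives discrete $\Z^{2}$-orbits by \cite[Theorem 6.2]{W}, subsets of discrete subspaces are discrete, a continuous bijection between discrete spaces is a homeomorphism, and the dichotomy run backwards yields $T_{0}$ for $\dot{\Z}\backslash\widehat{A}$ and $\ddot{\Z}\backslash\widehat{A}$ --- is a correct substitute, and it uses only what is actually available: that $\widehat{A}$ is second countable, almost Hausdorff and locally compact, which holds because $A$ is separable and is GCR in the direction where the transfer is needed. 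So your write-up does more than fill in omitted details; it replaces the one step of the cited argument that does not hold as stated, and the same device (with $G/G_{x}$ in place of $G$) would equally repair the proof of Proposition \ref{GCR piso}, where freeness is not assumed.
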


\begin{proof}
We skip the proof as it follows by a similar discussion to the proof of Proposition \ref{GCR piso} and Remark \ref{ZM result}.
\end{proof}

\section{Primitivity of $A\times_{\alpha}^{\piso}\N^{2}$}
\label{sec:last}
Finally, we have:
\begin{theorem}
\label{primitive}
Let $(A,\N^{2},\alpha)$ be a dynamical system consisting of a (nonzero) $C^{*}$-algebra $A$ and an action $\alpha$ of $\N^{2}$ on
$A$ by automorphisms. Then, $A\times_{\alpha}^{\piso}\N^{2}$ is primitive if and only $A$ is primitive.
\end{theorem}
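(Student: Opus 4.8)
The plan is to reduce the statement to a purely topological fact about $\Prim(A\times_{\alpha}^{\piso}\N^{2})$, exploiting the dense-point characterization of primitivity. Recall that a nonzero $C^{*}$-algebra $B$ is primitive precisely when $\Prim B$ has a \emph{dense point}, i.e.\ a $P_{0}$ with $\overline{\{P_{0}\}}=\Prim B$: indeed $B$ is primitive iff $\{0\}\in\Prim B$, and since $\overline{\{0\}}=\{P\in\Prim B:P\supseteq 0\}=\Prim B$, the zero ideal is exactly such a point; conversely, as a $C^{*}$-algebra is semisimple (the intersection of all its primitive ideals is $\{0\}$), any dense point $P_{0}$ satisfies $P_{0}\subseteq\bigcap_{P\in\Prim B}P=\{0\}$, forcing $P_{0}=\{0\}$. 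Thus ``being primitive'' is the topological property ``$\Prim B$ has a dense point'', which is preserved under homeomorphisms.

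First I would invoke the description from Section \ref{prim A}: the ideal $\K(\ell^{2}(\N^{2}))\otimes A$ is essential in $A\times_{\alpha}^{\piso}\N^{2}$, and the map $P\mapsto\ind P$ is a homeomorphism of $\Prim A$ onto the open dense subset
$$U=\big\{\I\in\Prim (A\times_{\alpha}^{\piso}\N^{2}):\K(\ell^{2}(\N^{2}))\otimes A\not\subset \I\big\}$$
of $X:=\Prim (A\times_{\alpha}^{\piso}\N^{2})$, which is nonempty since $A\neq 0$. It then suffices to prove the elementary lemma: if $U$ is open and dense in a space $X$, then $X$ has a dense point iff $U$ does (in the subspace topology). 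The key remark is that a dense point lies in \emph{every} nonempty open set: if $\overline{\{x_{0}\}}=X$ and $V\neq\emptyset$ is open, then $\overline{\{x_{0}\}}\cap V\neq\emptyset$, so some $q\in V$ has $x_{0}$ in each of its neighbourhoods, and as $V$ is a neighbourhood of $q$ we get $x_{0}\in V$. Hence if $x_{0}$ is dense in $X$ then $x_{0}\in U$ and $\overline{\{x_{0}\}}^{\,U}=\overline{\{x_{0}\}}^{\,X}\cap U=U$; conversely, if $u_{0}$ is dense in $U$ then $U\subseteq\overline{\{u_{0}\}}^{\,X}$, so $X=\overline{U}^{\,X}\subseteq\overline{\{u_{0}\}}^{\,X}$.

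Combining these, $A\times_{\alpha}^{\piso}\N^{2}$ is primitive iff $X$ has a dense point iff $U\cong\Prim A$ has a dense point iff $A$ is primitive. I do not expect a genuine obstacle: the argument rests only on the dense-point characterization of primitivity and the already-established homeomorphism of $\Prim A$ onto the open dense subset $U$. The only points demanding care are the semisimplicity of $C^{*}$-algebras (so a dense point must be the zero ideal) and the direction of the subspace-closure computations.

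If a self-contained forward implication is preferred, it can be made constructive via the representations already built in the paper: when $\pi$ is a faithful irreducible representation of $A$, Proposition \ref{Ip} gives an irreducible $\tilde{\pi}\times V$ with $\ker(\tilde{\pi}\times V)\cap\big(\K(\ell^{2}(\N^{2}))\otimes A\big)=\K(\ell^{2}(\N^{2}))\otimes\ker\pi=0$, and essentiality of $\K(\ell^{2}(\N^{2}))\otimes A$ then forces $\ker(\tilde{\pi}\times V)=0$, exhibiting a faithful irreducible representation of $A\times_{\alpha}^{\piso}\N^{2}$; the converse follows by restricting a faithful irreducible representation of $A\times_{\alpha}^{\piso}\N^{2}$ to this essential ideal and using that $\K(\ell^{2}(\N^{2}))\otimes A$ is primitive iff $A$ is.
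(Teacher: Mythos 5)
Your proposal is correct, and its main line is a genuinely different route from the paper's. The paper argues representation-theoretically in both directions: given a faithful irreducible representation of $A\times_{\alpha}^{\piso}\N^{2}$, its kernel (the zero ideal) cannot contain the nonzero ideal $\K(\ell^{2}(\N^{2}))\otimes A$, so by Proposition \ref{Ip} and Remark \ref{map Ip} that representation is of the form $\tilde{\pi}\times V$; restricting to the essential ideal yields the faithful irreducible representation $\id\otimes\pi$, whence $\K(\ell^{2}(\N^{2}))\otimes A$, and then $A$, is primitive. Conversely, from a faithful irreducible $\pi$ of $A$ the paper forms $\tilde{\pi}\times V$, whose restriction $\id\otimes\pi$ to the essential ideal is faithful, forcing $\tilde{\pi}\times V$ itself to be faithful. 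You instead reduce everything to topology: primitivity of a nonzero $C^{*}$-algebra $B$ means exactly that $\Prim B$ has a dense point (your semisimplicity argument that any dense point must be the zero ideal is right), and possession of a dense point transfers back and forth between a space and any nonempty open dense subspace --- your subspace-closure computations are correct, as is the observation that a dense point lies in every nonempty open set. Combined with the homeomorphism $P\mapsto\ind P$ of $\Prim A$ onto the open dense subset $U$ (Remark \ref{map Ip}, valid for arbitrary $A$), this closes the argument. What your route buys is generality and economy: it in effect proves the abstract statement that a $C^{*}$-algebra is primitive if and only if any one of its essential ideals is primitive, applied here to $\K(\ell^{2}(\N^{2}))\otimes A$, whose primitivity is equivalent to that of $A$; no representation of the crossed product is ever constructed. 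What the paper's route buys is an explicit witness: the concrete faithful irreducible representation $\tilde{\pi}\times V$ on $\ell^{2}(\N^{2})\otimes H$. Your closing alternative sketch (building $\tilde{\pi}\times V$ and invoking essentiality in one direction, restricting a faithful irreducible representation to the essential ideal in the other) is essentially the paper's own proof; note only that what you label the ``forward implication'' there is, in the theorem's phrasing, the converse direction.
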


\begin{proof}
If $A\times_{\alpha}^{\piso}\N^{2}$ is primitive, then it has a faithful nonzero irreducible representation $\rho$, and hence, $\ker \rho$,
which is the zero ideal, is a primitive ideal of $A\times_{\alpha}^{\piso}\N^{2}$. But, this ideal can only be derived from $\Prim A$ as all
primitive ideals of $A\times_{\alpha}^{\piso}\N^{2}$ except the ones derived from $\Prim A$ contain the algebra
$\K(\ell^{2}(\N^{2}))\otimes A$. Therefore, it follows from Proposition \ref{Ip} that the representation $\rho$ is the representation
$\tilde{\pi}\times V$ corresponding to a pair $(\tilde{\pi},V)$ induced by a (nonzero) irreducible representation $\pi$ of $A$.
It thus follows that the restriction of $\rho=\tilde{\pi}\times V$ to the ideal $\K(\ell^{2}(\N^{2}))\otimes A$ of $A\times_{\alpha}^{\piso}\N^{2}$, which is the representation $\id\otimes \pi$, is a (nonzero) faithful irreducible representation of $\K(\ell^{2}(\N^{2}))\otimes A$. So,
the algebra $\K(\ell^{2}(\N))\otimes A$ is primitive, which implies that $A$ must be primitive (in fact, $\ker \pi=\{0\}$).

Conversely, if $A$ is primitive, then it has a faithful nonzero irreducible representation $\pi$. Let $\tilde{\pi}\times V$ be the (nonzero)
irreducible representation of $A\times_{\alpha}^{\piso}\N^{2}$ corresponding to the pair $(\tilde{\pi},V)$ induced by the representation $\pi$ (see again Proposition \ref{Ip}). Now, the restriction $(\tilde{\pi}\times V)|_{\K(\ell^{2}(\N^{2}))\otimes A}=\id\otimes \pi$ is faithful as the
representations $\id$ and $\pi$ are (see \cite[Corollary B.11]{RW}). Therefore, since $\K(\ell^{2}(\N^{2}))\otimes A$ is an essential ideal of
$A\times_{\alpha}^{\piso}\N^{2}$ (see \cite[Corollary 3.13]{SZ4}), it follows that the representation $\tilde{\pi}\times V$ must be faithful. So,
the algebra $A\times_{\alpha}^{\piso}\N^{2}$ is primitive.
\end{proof}

\subsection*{Acknowledgements}
This work (Grant No. RGNS 64-102) was financially supported by Office of the Permanent Secretary, Ministry of Higher Education, Science,
Research and Innovation.

\end{document}